\newcommand{\dst}{\displaystyle}
\newtheorem{thm}{Theorem}[section]
\newtheorem{lem}[thm]{Lemma}
\newtheorem{defi}[thm]{Definition}
\newtheorem*{A}{Conjecture}
\begin{document}

\title[{On row sequences of Pad\'e and Hermite-Pad\'e approximation}]
{On row sequences of Pad\'e and Hermite-Pad\'e approximation}

\author[G. L\'opez ]{G. L\'opez Lagomasino}
\address{Dpto. de Matem\'aticas\\
Escuela Polit\'ecnica Superior \\
Universidad Carlos III de Madrid \\
Universidad 30, 28911 Legan\'es, Spain} \email{lago@math.uc3m.es}

\dedicatory{Dedicated to my friend Ed on the occasion of his 70-th birthday}

\begin{abstract} A survey of direct and inverse type results for row sequences of Pad\'e and Hermite-Pad\'e approximation is given. A conjecture is posed on an inverse type result for type II Hermite-Pad\'e approximation when it is known that the sequence of common denominators of the approximating vector rational functions has a limit. Some inverse type results  are proved for the so called incomplete Pad\'e approximants which may lead to the proof of the conjecture and the connection is discussed.
\end{abstract}
\date{\today}
\maketitle

\noindent
{\bf Keywords} Montessus de Ballore Theorem $\cdot$ Simultaneous approximation $\cdot$
Hermite-Pad\'e approximation $\cdot$ Rate of convergence $\cdot$ Inverse results

\vspace{0,5cm}\noindent
{\bf Mathematics Subject Classification (2010)} Primary 30E10 $\cdot$ 41A21 $\cdot$ 41A28 $\cdot$ Secondary 41A25 $\cdot$ 41A27


\section{Introduction}
The study of direct and inverse type results for sequences of rational functions with a fixed number of free poles has been a subject of constant interest in the research of E.B. Saff.  In different contexts (multi-point Pad\'e \cite{S3}, best rational \cite{S1}--\cite{S2}, Hermite-Pad\'e  \cite{GS1}--\cite{GS3}, and Pad\'e orthogonal approximations \cite{BLS}--\cite{BoLS}) such results are related with Montessus de Ballore's classical theorem \cite{Mon} on the convergence of the $m$-th row of the Pad\'e table associated with a formal Taylor expansion
\begin{equation} \label{i1} f(z) = \sum_{n\geq 0} \phi_n z^n
\end{equation}
provided that it represents a meromorphic function with exactly $m$ poles (counting multiplicities) in an open disk centered at the origin, and its converse due to A.A. Gonchar \cite[Section 3, Subsection 4]{gon2},  \cite[Section 2]{gon3} which allows to deduce analytic properties of $f$ if it is known that the poles of the approximants converge with geometric rate.

Let $m \in \mathbb{Z}_+ = \{0,1,2,\ldots\}$ be fixed. If $f$ is analytic at the origin, $D_m(f)$ denotes the largest open disk centered at the origin to which $f$ may be extended as a meromorphic function with at most $m$ poles and $R_m(f)$ is its radius; otherwise, we take $D_m(f) = \emptyset$ and $R_m(f) = 0$ for each $m \in \mathbb{Z}_+ $. The value $R_m(f)$ may be calculated,  as shown by J. Hadamard \cite{Had}, in terms of the Taylor coefficients $\phi_n$. Let $\mathcal{P}_m(f)$ be the set of  poles in $D_m(f)$. By $(R_{n,m})_{n \geq 0}, m \in \mathbb{Z}_+$ fixed, we denote the $m$-th row of the Pad\'e table associated with $f$,  see Definition \ref{defsimultaneos} restricted to $d=1$.

The combined Montessus de Ballore--Gonchar theorem may be formulated in the following terms
\begin{thm} \label{A}
 Let $f$ be a formal Taylor expansion about the origin and fix $m \in
\mathbb{N} = \{1,2,\ldots\}$. Then, the following two assertions are equivalent.
\begin{itemize}
\item[a)] $R_0(f) > 0$ and $f$ has exactly $m$ poles in $D_m(f)$
counting multiplicities.
\item[b)] There is a monic polynomial $Q_m$ of degree $m,\, Q_m(0) \neq 0,$  such
that the sequence of denominators $(Q_{n,m})_{ n \geq 0}$ of the
Pad\'e approximations of $f$, taken with leading coefficient equal to $1$, satisfies
$$
\limsup_{n \to \infty} \|Q_m - Q_{n,m}\|^{1/n} = \theta < 1,
$$
where $\|\cdot\|$ denotes the $\ell^1$ coefficient norm in the space of
polynomials.
\end{itemize}
Moreover, if either ${\rm a})$ or ${\rm b})$ takes place, the zeros of
$Q_m$ are the poles of $f$ in $D_m(f)$,
\begin{equation} \label{eq:6a}
\theta=\frac{\max \{|\xi|: \xi \in {\mathcal{P}}_m(f)\}}{R_m(f)},
\end{equation}
and
\begin{equation}\label{eq:6}
\limsup_{n \to \infty}
\|f - R_{n,m}\|_{K}^{1/n} = \frac{\|z\|_ {K}}{R_{m}(f)},
\end{equation}
where ${K}$ is any compact subset of $D_m(f) \setminus
{\mathcal{P}}_m(f)$.
\end{thm}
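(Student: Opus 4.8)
The plan is to prove the equivalence by establishing the two implications separately: a) $\Rightarrow$ b) is the quantitative Montessus de Ballore theorem (the \emph{direct} part), while b) $\Rightarrow$ a) is Gonchar's \emph{inverse} theorem; the two formulas in the ``moreover'' clause then emerge by letting the direct and inverse estimates interlock. Throughout I write $g := Q_m f$, where in direction a) $\Rightarrow$ b) the monic polynomial $Q_m(z)=\prod_{j=1}^m(z-\xi_j)$ collects the $m$ poles $\xi_1,\dots,\xi_m$ (with multiplicity) of $f$ in $D_m(f)$, so that $g$ is holomorphic in $|z|<R:=R_m(f)$ and $\limsup_k|g_k|^{1/k}=1/R$; I set $r:=\max\{|\xi|:\xi\in\mathcal{P}_m(f)\}$. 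Since the poles lie in $D_m(f)$ and $f$ is analytic at the origin, $Q_m(0)\neq0$.

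For a) $\Rightarrow$ b) I would first record the linear (orthogonality) relations behind the denominators: the defining property $Q_{n,m}f-P_{n,m}=O(z^{n+m+1})$ forces, for $Q_{n,m}(z)=\sum_{l=0}^m q_{n,l}z^l$, the equations $\sum_{l=0}^m q_{n,l}\phi_{n+i-l}=0$, $i=1,\dots,m$. Using the partial-fraction expansion $\phi_j=\sum(\text{polynomial in }j)\,\xi_k^{-j}+h_j$ with $|h_j|\le C_\rho\rho^{-j}$ for every $\rho<R$, a Hankel-determinant computation shows that the governing determinant is nonzero for all large $n$, so $Q_{n,m}$ has exact degree $m$, the monic normalization is legitimate, and $Q_{n,m}\to Q_m$. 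To pin the rate, set $D_n:=Q_m-Q_{n,m}$ (degree $\le m-1$) and subtract $\bigl(Q_m f\bigr)_{n+i}=g_{n+i}$ from $\bigl(Q_{n,m}f\bigr)_{n+i}=0$, getting $\sum_{l=0}^{m-1}(D_n)_l\,\phi_{n+i-l}=g_{n+i}$, $i=1,\dots,m$. The right side is of scale $R^{-n}$ while the matrix is governed by the poles; a Cramer's-rule/confluent-Vandermonde analysis yields $\limsup_n\|D_n\|^{1/n}=r/R$, the outermost pole being the bottleneck (the case $m=1$ is transparent: $\zeta_n=\phi_n/\phi_{n+1}\to\xi$ and $\zeta_n-\xi=g_{n+1}/\phi_{n+1}$ is of exact order $(r/R)^n$). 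This is b) together with (6a). For (6) I write $f-R_{n,m}=\Phi_n/(Q_mQ_{n,m})$ with $\Phi_n:=Q_{n,m}g-Q_mP_{n,m}$, holomorphic in $|z|<R$ and vanishing to order $\ge n+m+1$ at $0$; since $\deg(Q_mP_{n,m})\le n+m$, its coefficients beyond degree $n+m$ are those of $Q_{n,m}g$, giving $|(\Phi_n)_k|\le\|Q_{n,m}\|\max_{k-m\le l\le k}|g_l|\le C\rho^{-k}$ and, on summing the tail, $\|\Phi_n\|_K\lesssim(\|z\|_K/\rho)^{n}$. As $Q_{n,m}\to Q_m\neq0$ on the pole-free compactum $K$, the denominator is bounded below there, whence $\limsup_n\|f-R_{n,m}\|_K^{1/n}\le\|z\|_K/R$ after $\rho\uparrow R$; the matching lower bound is the maximality argument, a strictly faster rate permitting meromorphic continuation of $f$ with at most $m$ poles past $|z|=R$.

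For b) $\Rightarrow$ a) (Gonchar) the hard part lives. Assuming $Q_{n,m}\to Q_m$ geometrically, I again use $\bigl(Q_{n,m}f\bigr)_{n+i}=0$ to write, for $g=Q_m f$, $g_{n+i}=\bigl((Q_m-Q_{n,m})f\bigr)_{n+i}$, so $|g_{n+i}|\le\|Q_m-Q_{n,m}\|\max_{n+i-m\le j\le n+i}|\phi_j|\le C\theta^{\,n}\max_j|\phi_j|$. Because the $\phi_j$ are recovered from the $g_j$ by convolution with the geometrically decaying coefficients of $1/Q_m$, this can be bootstrapped: an induction on $k$ gives $\limsup_k|g_k|^{1/k}\le\theta/r$, so $g$ extends holomorphically to $|z|<r/\theta$ and $\limsup_j|\phi_j|^{1/j}=1/s$ with $s=\min|\xi|>0$, whence $R_0(f)=s>0$. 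Thus $f=g/Q_m$ is meromorphic in $|z|<r/\theta$ with poles only in the zero set $Z(Q_m)$ of $Q_m$, giving $R_m(f)\ge r/\theta$ and $\mathcal{P}_m(f)\subseteq Z(Q_m)$. The remaining rigidity---that all $m$ zeros of $Q_m$ are genuine poles---is delicate: were there fewer than $m$ poles, the surplus zeros of $Q_{n,m}$ could not stabilize to fixed points at a geometric rate, contradicting $Q_{n,m}\to Q_m$. Hence $f$ has exactly $m$ poles and a) holds.

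Finally, the two directions lock the constants together. Applying the direct part to the now-established meromorphic $f$ gives denominator rate $r/R_m(f)$, which must equal the prescribed $\theta$, so $R_m(f)=r/\theta$ and $\theta=\max\{|\xi|:\xi\in\mathcal{P}_m(f)\}/R_m(f)$, that is (6a); a larger disk of meromorphy would force a rate below $\theta$, confirming maximality of $R_m(f)$. The main obstacle I anticipate is precisely the Gonchar bootstrap together with this rigidity: one must simultaneously control the growth of the $\phi_j$ and rule out cancellation producing spurious limit poles, whereas the direct estimates and the error bound (6) are comparatively routine once the Hankel determinant is shown to be nonvanishing for large $n$.
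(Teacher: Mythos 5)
The paper itself offers no proof of this theorem: it is a survey statement, with the direct half (a) implies b), $\theta\le\max\{|\xi|:\xi\in\mathcal{P}_m(f)\}/R_m(f)$, and \eqref{eq:6} with $\le$) attributed to Montessus de Ballore \cite{Mon}, and the inverse half (b) implies a) and the two reverse inequalities) declared an immediate consequence of \cite[Theorem 1]{gon2}. So your proposal has to be measured against those classical arguments. Your direct half is essentially the standard Montessus route --- partial fractions, nonvanishing of the governing determinant for large $n$, Cramer's rule for $D_n=Q_m-Q_{n,m}$, and the error representation $f-R_{n,m}=(Q_{n,m}g-Q_mP_{n,m})/(Q_mQ_{n,m})$ --- and, modulo the paper's index normalization ($\deg P_{n,m}\le n-m$, contact of order $n+1$), it is sound \emph{as far as upper bounds go}: it yields $\limsup_n\|D_n\|^{1/n}\le r/R$ and $\le$ in \eqref{eq:6}, which is exactly what Montessus proved. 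Where you claim equality from the same computation, you are already using the inverse estimate.

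The genuine gap is the inverse half, and it is not a detail. Your bootstrap uses only two consequences of b): (i) the scalar bound $|g_k|\le C\theta^{k}\max_{k-m<j\le k}|\phi_j|$, obtained by taking absolute values in $g_{n+i}=\sum_{l}(D_n)_l\phi_{n+i-l}$, and (ii) $\phi_j=\sum_i c_i g_{j-i}$ with $|c_i|\lesssim s^{-i}$, where $s=\min\{|\xi|:Q_m(\xi)=0\}$ --- note the coefficients of $1/Q_m$ decay at the rate of the \emph{nearest} zero, not the farthest. No induction based on (i) and (ii) alone can deliver $\limsup_k|g_k|^{1/k}\le\theta/r$ with $r=\max\{|\xi|\}$: take $Q_m$ with a unique simple zero of modulus $s<r$, put $g(z)=(1-\theta z/s)^{-1}$ and $f=g/Q_m$; this pair satisfies (i) and (ii), yet $\limsup_k|g_k|^{1/k}=\theta/s>\theta/r$. (Such an $f$ does not satisfy hypothesis b) with this $Q_m$ --- which is precisely the point: passing to absolute values discards the cancellations that b) encodes, and it is those cancellations that force the radius $r/\theta$.) Without the sharp bound you obtain neither $R_m(f)\ge r/\theta$, which your final paragraph needs in order to upgrade the direct inequality to the equality \eqref{eq:6a}, nor that the outermost zeros of $Q_m$ belong to $\mathcal{P}_m(f)$; hence a) itself (exactly $m$ poles in $D_m(f)$) is not established. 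Your rigidity sentence (``the surplus zeros could not stabilize to fixed points at a geometric rate'') is a restatement of this missing claim, not an argument for it. This sharp inverse estimate is the actual content of Gonchar's theorem, and its known proofs run on different machinery --- Hadamard-type determinant formulas for $R_m(f)$, or regularized majorants $A^*_{n,m}$ together with telescoping identities like \eqref{tele}, as in Section \ref{incompletos} of this paper and in \cite{gon2}, \cite{sue2}, \cite{sue3}; a complete write-up would have to import one of these arguments rather than the coefficientwise estimate you propose.
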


Since all norms in finite dimensional spaces are equivalent in  b)  any other norm in the $m+1$ dimensional space of polynomials of degree $\leq m$ would do as well.

From Theorem \ref{A} it follows that if $\xi$ is a pole of $f$ in
$D_m(f)$ of order $\tau$, then for each $\varepsilon
>0$, there exists $n_0$ such that for $n \geq n_0$, $Q_{n,m}$ has
exactly $\tau$ zeros in $\{z: |z - \xi |< \varepsilon\}$. We say
that each pole of $f$ in $D_m(f)$ attracts as many zeros of
$Q_{n,m}$ as its order when $n$ tends to infinity.

Under assumptions a), in \cite{Mon} Montessus de Ballore proved that
\[ \lim_{n \to \infty}  Q_{n,m} = Q_m,
\qquad \lim_{n\to \infty} R_{n,m} = f,
\]
with uniform convergence on compact subsets of $D_m(f) \setminus
{\mathcal{P}}_m(f)$ in the second limit. In essence, Montessus
proved that a) implies b), showed that
$\theta \leq {\max \{|\xi|: \xi \in {\mathcal{P}}_m(f)\}}/{R_m(f)}$,
and proved \eqref{eq:6} with equality  replaced by $\leq$. These are
the so called direct statements of the theorem. The inverse
statements, b) implies a), $\theta \geq {\max \{|\xi|: \xi \in
{\mathcal{P}}_m(f)\}}/{R_m(f)}$, and the inequality $\geq$ in
\eqref{eq:6} are immediate consequences of \cite[Theorem 1]{gon2}.
The study of inverse problems when the behavior of individual sequences of poles of the approximants is known was suggested
by A.A. Gonchar in \cite[Subsection 12]{gon2} where he presented
some interesting conjectures. Some of them were solved in
\cite{sue1} and \cite{sue2} by S.P. Suetin.

In \cite{GS1}, Graves-Morris and Saff proved an analogue of
Montessus' theorem for Hermite-Pad\'e (vector rational) approximation with
the aid of the concept of polewise independence of a system of
functions.

Let ${\bf f} = (f_1,\ldots,f_d)$ be a system
of $d$ formal or convergent Taylor expansions about the origin; that
is, for each $k=1,\dots,d$, we have
\begin{equation} \label{sistema} f_k(z) = \sum_{n=0}^{\infty} \phi_{n,k} z^n,
\qquad \phi_{n,k} \in {\mathbb{C}}.
\end{equation}
Let  $\mathbf{D}=\left(D_1,\dots,D_d\right)$ be a system of domains
such that, for each $k=1,\dots,d,$ $f_k$ is meromorphic in $D_k$. We
say that the point $\xi$  is a pole of ${\bf f}$ in $\mathbf{D}$ of
order $\tau$ if there exists an index $k\in\{1,\dots,d\}$  such that
$\xi\in D_k$ and it is a pole of $f_k$ of order $\tau$, and for
$j\not = k$ either $\xi$ is a pole of $f_j$ of order less than or
equal to $\tau$ or $\xi \not\in D_j$. When
$\mathbf{D}={(D,\dots,D)}$ we say that  $\xi$ is a pole of ${\bf f}$
in $D$.

Let $R_0({\bf f})$ be radius of
the largest open disk $D_0({\bf f})$ in which all the expansions $f_k, k=1,\ldots,d$
correspond to analytic functions. If $R_0({\bf f}) =0$, we take
$D_{m}({\bf f}) = \emptyset, m \in {\mathbb{Z}}_+$; otherwise,
$R_m({\bf f})$ is the radius of the largest open disk $D_{m}({\bf f})$
centered at the origin to which all the analytic elements $(f_k,
D_0(f_k))$ can be extended so that ${\bf f}$ has at most $m$ poles
counting multiplicities. The disk $D_{m}({\bf f})$ constitutes for
systems of functions the analogue of the $m$-th disk of meromorphy
defined by J. Hadamard in \cite{Had} for $d=1$. Moreover, in that
case both definitions coincide.

By $\mathcal{Q}_m(\mathbf{f})$ we denote the monic polynomial whose
zeros are the poles of ${\bf f}$ in $D_{m}({\bf f})$ counting
multiplicities. The set of distinct zeros of
$\mathcal{Q}_m(\mathbf{f})$ is denoted by ${\mathcal{P}}_m({\bf
f})$.

\begin{defi}\label{defsimultaneos}
Let ${\bf f} = (f_1,\ldots,f_d)$ be a system of $d$ formal Taylor
expansions as in \eqref{sistema}. Fix a multi-index ${\bf m} =
({m_1},\ldots,m_d) \in {\mathbb{Z}}_+^d \setminus \{\bf 0\}$ where
${\bf 0}$ denotes the zero vector in ${\mathbb{Z}}_+^d$. Set $|{\bf
m}| = {m_1} +\cdots + m_d$. Then, for each $n \geq \max
\{{m_1},\ldots,m_d\}$, there exist polynomials $Q, P_k,
k=1,\ldots,d,$ such that
\begin{itemize}
\item[a.1)] $\deg P_k \leq n - m_k,\, k=1,\ldots,d,\quad \deg Q
\leq |\mathbf{m}|,\quad Q \not\equiv 0,$
\item[a.2)] $Q(z) f_k(z) - P_k (z) = A_k z^{n+1} + \cdots .$
\end{itemize}
The vector rational function ${\bf R}_{n,{\bf m}} = (P_1 /Q
,\ldots,P_d /Q)$ is called an $(n,{\bf m})$ (type II) Hermite-Pad\'e
approximation of ${\bf f}$.
\end{defi}

Type I  and type II Hermite-Pad\'e approximation were introduced by Ch. Hermite  and used in the proof of the transcendence of $e$, see \cite{Herm}. We will only consider here type II and, for brevity, will be called Hermite-Pad\'e approximants.

In contrast with Pad\'e approximation, such vector rational approximants, in general, are not
uniquely determined  and in the sequel we assume that
given $(n,{\bf m})$ one particular solution is taken. For that
solution we write
\begin{equation} \label{incomplete}
{\bf R}_{n,{\bf m}} = (R_{n,{\bf m},1},\ldots,R_{n,{\bf m},d})=
(P_{n,{\bf m},1},\ldots,P_{n,{\bf m},d})/Q_{n,{\bf m}},
\end{equation}
where $Q_{n,{\bf m}}$ is a monic polynomial that has no common zero
simultaneously with all the $P_{n,{\bf m},k}$. Sequences $({\bf
R}_{n,\bf m})_{ n \geq |{\bf m}|},$ for which ${\bf m}$ remains fixed when $n$ varies
are called row sequences.

For each $r>0$, set $D_r=\{z\in\mathbb{C}\,:\,|z|<r\}$,
$\Gamma_r=\{z\in\mathbb{C}\,:\,|z|=r\}$, and
$\overline{D}_r=\{z\in\mathbb{C}\,:\,|z|\le r\}$.
\begin{defi} \label{polewisei}
Let ${\bf f} = (f_1,\ldots,f_d)$ be a system of meromorphic
functions in the disk $D_r$ and let ${\bf m} = (m_1,\ldots,m_d)\in
\mathbb{Z}_+^d \setminus \{{\bf 0}\}$. We say that the system ${\bf
f}$ is polewise independent with respect to $\bf m$ in $D_r$ if
there do not exist polynomials $p_1,\ldots,p_d$, at least one of
which is non-null, such that
\begin{itemize}
\item[b.1)] $\deg p_k < m_k$ if $m_k\ge 1,\, k=1,\ldots,d$,
\item[b.2)] $ p_k \equiv 0$ if $m_k=0,\, k=1,\ldots,d$,
\item[b.3)] $\sum_{k=1}^d p_k f_k$ is analytic on  ${D}_{r}$.
\end{itemize}
\end{defi}

In \cite[Theorem 1]{GS1}, Graves-Morris and Saff established an analogue of the direct part of the previous theorem when ${\bf f}$ is polewise independent with respect to $\bf m$ in $D_{|\bf m|}({\bf f})$ obtaining upper bounds
for the convergence rates corresponding to \eqref{eq:6a} and
\eqref{eq:6}. It should be stressed that \cite{GS1} was pioneering in the sense that it initiated a convergence theory for Hermite-Pad\'e approximation.

The result \cite[Theorem 1]{GS1} does not allow a converse statement in the sense of Gonchar's theorem.
Inspired in the concept of polewise independence, in \cite{CCL} we proposed  the following relaxed version of it.

\begin{defi} \label{e-pole}
Given ${\bf f} = (f_1,\ldots,f_d)$ and ${\bf m} =
(m_1,\ldots,m_d)\in \mathbb{Z}_+^d \setminus \{{\bf 0}\}$ we say
that $\xi\in\mathbb{C}\setminus\{0\}$ is a system pole of order
$\tau$ of ${\bf f}$ with respect to $\bf m$ if $\tau$ is the largest positive integer such that for each
$s=1,\dots,\tau$ there exists at least one polynomial combination of
the form
\begin{equation}\label{combination}
\sum_{k=1}^d p_k f_k,\quad \deg p_k<m_k,\quad k=1,\dots,d,
\end{equation}
which is analytic on a neighborhood of $\overline{D}_{|\xi|}$ except
for a pole at $z=\xi$ of exact order $s$.  If some component $m_k$
equals zero the corresponding polynomial $p_k$ is taken identically
equal to zero.
\end{defi}

The advantage of this definition with respect to that of
polewise independence is that it does not require to determine
a priori a region where the property should be verified. Polewise independence of $\bf f$ in $D_{|{\bf m}|}({\bf f})$ with respect to $\bf m$ implies that $\bf f$ has in $D_{|{\bf m}|}$ exactly $|{\bf m}|$ system poles (counting their order).

We wish to underline that if some component $m_k$ equals zero, that
component places no restriction on Definition \ref{defsimultaneos}
and does not report any benefit in finding system poles; therefore,
without loss of  generality one can restrict the attention to
multi-indices ${\bf m} \in \mathbb{N}^d$.

A system $\mathbf{f}$ cannot have
more than $|\mathbf{m}|$ system poles with respect to $\mathbf{m}$
counting their order. A system pole need not be a pole of ${\bf f}$
and a pole may not be a system pole, see examples in \cite{CCL}.

To each system pole $\xi$ of $\mathbf{f}$ with respect to
$\mathbf{m}$ one can associate several characteristic values. Let $\tau$ be
the order
of $\xi$ as a system pole of $\mathbf{f}$. For each $s=1,\dots,\tau$
denote by $r_{\xi,s}(\mathbf{f},\mathbf{m})$ the largest of all the
numbers $R_s(g)$ (the radius of the largest disk containing at most
$s$ poles of $g$), where $g$ is a polynomial combination of type
\eqref{combination} that is analytic on a neighborhood of
$\overline{D}_{|\xi|}$ except for a pole at $z=\xi$ of order $s$.
Set
$$
\begin{array}{c}
\dst R_{\xi,s}(\mathbf{f},\mathbf{m}):=\min_{k=1,\dots,s} r_{\xi,
{k}}(\mathbf{f},\mathbf{m}),
\\ \\ \dst
R_\xi(\mathbf{f},\mathbf{m}) :=
R_{\xi,\tau}(\mathbf{f},\mathbf{m})=\min_{s=1,\dots,\tau}
r_{\xi,s}(\mathbf{f},\mathbf{m}).
\end{array}
$$

Obviously, if $d=1$ and $({\bf f,\bf m}) = (f,m)$, system poles and
poles in $D_m(f)$ coincide. Also,
$R_\xi(\mathbf{f},\mathbf{m})=R_{m}(f)$ for each
 pole $\xi$ of $f$ in $D_{m}(f)$.

Let $\mathcal{Q}(\mathbf{f},\mathbf{m})$ denote the
monic polynomial whose zeros are the system poles of ${\bf f}$ with
respect to $\mathbf{m}$ taking account of their order. The set of
distinct zeros of $\mathcal{Q}(\mathbf{f},\mathbf{m})$ is denoted by
${\mathcal{P}}({\mathbf f},\mathbf{m})$. We have (see \cite[Theorem 1.4]{CCL})

\begin{thm} \label{reciprocal} Let ${\bf f}$ be a system
of formal Taylor expansions as in \eqref{sistema} and fix a
multi-index $\mathbf{m}\in\mathbb{N}^d $. Then, the following
assertions are equivalent.
\begin{itemize}
\item[a)]
$R_0(\mathbf{f}) > 0$ and ${\bf f}$ has exactly $|{\bf m}|$ system
poles with respect to $\mathbf{m}$ counting multiplicities.
\item[b)] The
denominators $Q_{n,{\bf m}},\,n \geq |{\bf m}|,$ of simultaneous
Pad\'e approximations of ${\bf f}$ are uniquely determined for all
sufficiently large $n$ and there exists a polynomial
$Q_{|\mathbf{m}|}$ of degree $|\mathbf{m}|,\, Q_{|\mathbf{m}|}(0)
\neq 0,$ such that
$$
\limsup_{n \to \infty} \|Q_{|{\bf m}|} - Q_{n,{\bf m}}\|^{1/n} =
\theta < 1.
$$
\end{itemize}
Moreover, if either ${\rm a})$ or ${\rm b})$ takes place then
$Q_{|\mathbf{m}|}\equiv\mathcal{Q}(\mathbf{f},\mathbf{m})$ and
\begin{equation} \label{eq:6b}
\theta=\max\left\{\frac {|\xi|}
{R_{\xi}(\mathbf{f},\mathbf{m})}\,:\, \xi \in
{\mathcal{P}}(\mathbf{f},\mathbf{m}) \right\}.
\end{equation}
\end{thm}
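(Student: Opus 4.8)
The plan is to base both implications on a single family of orthogonality relations satisfied by the common denominator. From a.2) of Definition \ref{defsimultaneos}, together with $\deg P_{n,\mathbf{m},k}\le n-m_k$, the Taylor coefficients of $Q_{n,\mathbf{m}}f_k$ of orders $n-m_k+1,\dots,n$ must vanish. Extracting these coefficients by Cauchy's formula and combining them, one checks that for every choice of polynomials $p_k$ with $\deg p_k<m_k$ and every sufficiently small $\rho>0$,
\[
\frac{1}{2\pi i}\oint_{\Gamma_\rho} Q_{n,\mathbf{m}}(z)\Big(\sum_{k=1}^d p_k(z)f_k(z)\Big)\frac{dz}{z^{n+1}}=0 .
\]
Since $g=\sum_k p_k f_k$ is exactly a polynomial combination of the type \eqref{combination} appearing in Definition \ref{e-pole}, this identity is the bridge between the approximants and the system poles, and it drives both directions.

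For the direct implication a) $\Rightarrow$ b), I would normalize $Q_{n,\mathbf{m}}$ to be monic and first show that its coefficients stay bounded, so that subsequential limits exist. Let $\xi_1,\dots,\xi_p$ be the system poles with orders $\tau_1,\dots,\tau_p$, $\sum_i\tau_i=|\mathbf{m}|$, and set $\mathcal{Q}=\mathcal{Q}(\mathbf{f},\mathbf{m})$. For each $\xi_i$ and each $s\le\tau_i$, Definition \ref{e-pole} furnishes a combination $g_{i,s}$ analytic on a neighborhood of $\overline{D}_{|\xi_i|}$ except for a pole of order $s$ at $\xi_i$, extending meromorphically out to radius $r_{\xi_i,s}$. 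Inserting $g_{i,s}$ into the identity above and deforming $\Gamma_\rho$ outward across $\xi_i$ to a circle $\Gamma_r$ with $|\xi_i|<r<R_{\xi_i}(\mathbf{f},\mathbf{m})$, the vanishing integral becomes
\[
\operatorname*{Res}_{z=\xi_i}\frac{Q_{n,\mathbf{m}}(z)g_{i,s}(z)}{z^{n+1}}
=\frac{1}{2\pi i}\oint_{\Gamma_r}\frac{Q_{n,\mathbf{m}}(z)g_{i,s}(z)}{z^{n+1}}\,dz .
\]
The right-hand side is $O\big((\max_{\Gamma_r}|Q_{n,\mathbf{m}}|)\,r^{-n}\big)$, hence geometrically small, while the residue on the left is a linear combination of the values $Q_{n,\mathbf{m}}(\xi_i),\dots,Q_{n,\mathbf{m}}^{(s-1)}(\xi_i)$ divided by $\xi_i^{\,n+1}$. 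Letting $s$ run from $1$ to $\tau_i$ produces a triangular system forcing $Q_{n,\mathbf{m}}$ to have a zero of order $\tau_i$ at $\xi_i$ in the limit; comparing the two sides gives geometric decay with ratio $|\xi_i|/r$ and, optimizing $r$ over the admissible radii, the rate $|\xi_i|/R_{\xi_i}(\mathbf{f},\mathbf{m})$. Thus every subsequential limit equals $\mathcal{Q}$, the convergence is geometric, and taking the maximum over $i$ yields \eqref{eq:6b}; the nondegeneracy of the limiting linear system also shows that $Q_{n,\mathbf{m}}$ is uniquely determined for all large $n$, completing b).

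For the inverse implication b) $\Rightarrow$ a), I would start from the geometric convergence $Q_{n,\mathbf{m}}\to Q_{|\mathbf{m}|}$ and use it to manufacture the combinations required by Definition \ref{e-pole}. Writing $Q_{|\mathbf{m}|}f_k=(Q_{|\mathbf{m}|}-Q_{n,\mathbf{m}})f_k+Q_{n,\mathbf{m}}f_k$ and recalling that $Q_{n,\mathbf{m}}f_k-P_{n,\mathbf{m},k}=O(z^{n+1})$, the geometric smallness of $Q_{|\mathbf{m}|}-Q_{n,\mathbf{m}}$ shows that $Q_{|\mathbf{m}|}f_k$ is approximable by the polynomials $P_{n,\mathbf{m},k}$ with geometric error, yielding an analytic continuation of each $Q_{|\mathbf{m}|}f_k$ to a disk whose radius is governed by $\theta$. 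Passing to the limit in the linear forms $(p_1,\dots,p_d)\mapsto\sum_k p_k f_k$ that are consistent with this decay, one recovers, for each zero $\xi$ of $Q_{|\mathbf{m}|}$ of multiplicity $\tau$ and each $s\le\tau$, a combination of type \eqref{combination} analytic past $\overline{D}_{|\xi|}$ except for a pole of order exactly $s$ at $\xi$; this certifies $\xi$ as a system pole of order $\tau$. Since $\deg Q_{|\mathbf{m}|}=|\mathbf{m}|$ and a system admits at most $|\mathbf{m}|$ system poles counting order, there are exactly $|\mathbf{m}|$ of them, giving a) together with $Q_{|\mathbf{m}|}\equiv\mathcal{Q}(\mathbf{f},\mathbf{m})$.

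The hard part will be the inverse direction, and specifically pinning down the exact order of the pole and the exact radius. Geometric convergence of the denominators delivers, more or less directly, that each zero $\xi$ of $Q_{|\mathbf{m}|}$ is a pole of some admissible combination; but showing that one can realize poles of \emph{every} order $s=1,\dots,\tau$ at $\xi$ (as required for $\xi$ to be a system pole of order $\tau$), and that the associated radii $r_{\xi,s}$ combine, through the minimum defining $R_\xi(\mathbf{f},\mathbf{m})$, to reproduce exactly the observed rate $\theta$ in \eqref{eq:6b}, is the delicate point. The difficulty is intrinsic to Hermite--Pad\'e approximation: system poles are defined through polynomial combinations and need not be poles of the individual $f_k$, so the order and location information must be extracted from the joint asymptotics of $(Q_{n,\mathbf{m}},P_{n,\mathbf{m},1},\dots,P_{n,\mathbf{m},d})$ rather than from any single component.
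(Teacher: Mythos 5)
The paper itself contains no proof of Theorem \ref{reciprocal}: it is quoted from \cite[Theorem 1.4]{CCL}, and Section \ref{incompletos} of the survey describes the machinery (incomplete Pad\'e approximants, the quantities $A_{n,m}$, $R_m^*(f)$, Lemmas \ref{teo:5} and \ref{cor;1}) on which that proof is built. Your sketch of a) $\Rightarrow$ b) is essentially the argument used in \cite{CCL}: the orthogonality relations you derive from a.1)--a.2) are correct, and the contour-deformation/residue scheme at each system pole, with $s$ running from $1$ to $\tau_i$ to produce a triangular system in $Q_{n,\mathbf{m}}(\xi_i),\dots,Q_{n,\mathbf{m}}^{(\tau_i-1)}(\xi_i)$, is the standard and correct route to the direct part, including the rate $|\xi_i|/R_{\xi_i}(\mathbf{f},\mathbf{m})$. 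Two repairs are needed there: you should normalize as in \eqref{normado} rather than ``monic with bounded coefficients'' (a priori some zeros of $Q_{n,\mathbf{m}}$ could escape to infinity, and boundedness in the monic normalization is part of what must be proved); and uniqueness of $Q_{n,\mathbf{m}}$ for large $n$ is obtained in \cite{CCL} from polynomial independence of $\mathbf{f}$ with respect to $\mathbf{m}$ (a consequence of a)), not from ``nondegeneracy of the limiting linear system,'' which you assert but do not establish.

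The genuine gap is in b) $\Rightarrow$ a), and it is twofold. First, your opening move --- writing $Q_{|\mathbf{m}|}f_k=(Q_{|\mathbf{m}|}-Q_{n,\mathbf{m}})f_k+Q_{n,\mathbf{m}}f_k$ and calling the first term geometrically small --- is circular: that term can only be estimated on compact subsets of a disk where $f_k$ is already known to be analytic, yet $R_0(\mathbf{f})>0$ is part of conclusion a) and must itself be extracted from b). In the scalar case this step is the substance of Gonchar's inverse theorem \cite{gon2}; in the vector case it is exactly what Lemma \ref{cor;1} (i.e.\ \cite[Cor.\ 2.4, 2.5]{CCL}) supplies, via the coefficients $A_{n,m}$, the disk $D_m^*(f)$ and the telescoping identity \eqref{tele} --- none of which appear in your argument. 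Second, the step you yourself flag as ``delicate'' --- producing, for each zero $\xi$ of $Q_{|\mathbf{m}|}$ of multiplicity $\tau$ and \emph{every} $s=1,\dots,\tau$, a combination of type \eqref{combination} analytic on a neighborhood of $\overline{D}_{|\xi|}$ except for a pole of exact order $s$ at $\xi$ --- is not a detail but the heart of the inverse implication; ``passing to the limit in the linear forms consistent with this decay'' is not an argument. Without it one only concludes that each zero of $Q_{|\mathbf{m}|}$ carries a singularity of some combination, which is the strictly weaker kind of statement addressed by the paper's Conjecture (where geometric rates are unavailable). Third, equality \eqref{eq:6b} also requires the lower bound $\theta\geq\max\{|\xi|/R_{\xi}(\mathbf{f},\mathbf{m})\}$, which your sketch never addresses. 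So the direct half is the right approach modulo repairs, but the inverse half is genuinely incomplete, and the missing pieces are precisely those the paper's incomplete-Pad\'e machinery was designed to provide.
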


If $d=1$,  $R_{n,m}$ and $Q_{n,m}$ are uniquely determined; therefore,  Theorem \ref{reciprocal} contains Theorem \ref{A}. The analogue of  \eqref{eq:6}
is found in \cite[Theorem 3.7]{CCL}).

In the rest of the paper we wish to discuss the case when
\begin{equation}
\label{condfun}
\lim_{n \to \infty} Q_{n,{\bf m}} = Q_{|{\bf m}|}, \qquad \deg Q_{|{\bf m}|} = |{\bf m}|, \qquad Q_{|{\bf m}}(0) \neq 0,
\end{equation}
but the rate of convergence is not known in advance. Now the reference in the scalar case is a result by S.P. Suetin \cite{sue2}.

\begin{thm} \label{Sueteo}
Assume that $\lim_{n\to \infty} Q_{n,m}(z) = Q_m(z) = \prod_{k=1}^m(z -z_k)$ and
\[ 0 < |z_1| \leq \cdots \leq |z_N| < |z_{N+1}| = \cdots = |z_m| = R.
\]
Then $z_1,\ldots,z_N$ are the poles of $f$ in $D_{m-1}(f)$ (taking account of their order), $R_N(f) = \cdots = R_{m-1}(f) = R$, and $z_{N+1}, \ldots, z_m$ are singularities of $f$ on the boundary of $D_{m-1}(f)$.
\end{thm}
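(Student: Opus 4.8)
The plan is to turn the plain convergence $Q_{n,m}\to Q_m$ into a meromorphic continuation of $f$ across $D_R$, $R=|z_m|$, by controlling the successive differences $R_{n+1,m}-R_{n,m}$, and to read off the poles and the boundary singularities from the asymptotics of the Hankel determinants of $f$. Two algebraic facts are the starting point. Writing $Q_{n,m}(z)=\sum_{i=0}^m q_i^{(n)}z^i$ with $q_m^{(n)}=1$, the interpolation conditions a.2) of Definition \ref{defsimultaneos} (for $d=1$) force $\sum_{i=0}^m q_i^{(n)}\phi_{\nu-i}=0$ for $n-m+1\le\nu\le n$; thus the coefficients $\phi_\nu$ obey an order-$m$ linear recurrence whose coefficients tend to those of $Q_m$ and whose limiting characteristic roots are $1/z_1,\dots,1/z_m$. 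Second, a standard manipulation of the remainders $g_n:=Q_{n,m}f-P_{n,m}=O(z^{n+1})$ gives $P_{n+1,m}Q_{n,m}-P_{n,m}Q_{n+1,m}=c_n z^{n+1}$ with $c_n=q_0^{(n+1)}\,[z^{n+1}]g_n$, so that
\begin{equation*}
R_{n,m}(z)=R_{n_0,m}(z)+\sum_{k=n_0}^{n-1}\frac{c_k\,z^{k+1}}{Q_{k+1,m}(z)\,Q_{k,m}(z)}.
\end{equation*}
On any compact subset of $D_R\setminus\{z_1,\dots,z_N\}$ the denominators are eventually bounded away from $0$ (since $Q_{n,m}\to Q_m$ and the only zeros of $Q_m$ in $\overline{D}_R$ are $z_1,\dots,z_N$ inside and $z_{N+1},\dots,z_m$ on $\Gamma_R$), so continuing $f$ meromorphically to $D_R$ with poles contained in $\{z_1,\dots,z_N\}$ reduces to the single estimate $\limsup_k|c_k|^{1/k}\le 1/R$.

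This estimate is the crux, and the naive bound is useless here: writing $[z^{n+1}]g_n=\sum_i q_i^{(n)}\phi_{n+1-i}$ and comparing with the limiting sum $\sum_i q_i\phi_{n+1-i}$ leaves the remainder $\sum_i(q_i^{(n)}-q_i)\phi_{n+1-i}$, and multiplying the unknown rate of $q_i^{(n)}-q_i\to0$ against the coefficients $\phi_n$ only reproduces their own growth rate and cannot supply the additional decay down to $1/R$. Instead I would solve the Hankel system for $q^{(n)}$ by Cramer's rule and recognize $[z^{n+1}]g_n$ as a ratio of two consecutive Hankel determinants of the array $(\phi_j)$, of orders $m+1$ and $m$. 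Here Hadamard's determinantal description \cite{Had} of the discs of meromorphy enters, for it expresses the radii $R_j(f)$ and the poles of $f$ directly through the growth of these determinants; the hypothesis $Q_{n,m}\to Q_m$, being itself a statement about ratios of the same determinants, is exactly what should force the order-$(m+1)$ determinant to decay by one factor $1/R$ faster than the order-$m$ one, i.e. the genuine cancellation that the determinant ratio captures and the naive bound misses. Establishing this sharp, rate-free behaviour of the Hankel determinants is the step I expect to be the main obstacle.

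Granting $\limsup_k|c_k|^{1/k}\le 1/R$, the telescoping series converges locally uniformly on $D_R\setminus\{z_1,\dots,z_N\}$ to a meromorphic function; since it coincides with $f$ as a formal power series, $f$ continues meromorphically to $D_R$ (in particular $R_0(f)>0$) with poles among $z_1,\dots,z_N$. That every $z_k$ with $|z_k|<R$ is a pole of exactly the order of its multiplicity in $Q_m$, whence $R_N(f)\ge R$ and $R_{m-1}(f)\ge R$, I would again extract from the same determinant asymptotics, which localize the poles of $f$ together with their orders.

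It remains to prove $R_N(f)=\dots=R_{m-1}(f)=R$ and that $z_{N+1},\dots,z_m$ are singularities of $f$ on $\Gamma_R$; this needs the two-sided form of the determinant asymptotics, with equality in the previous estimate forced precisely by the $m-N$ zeros of $Q_m$ of modulus exactly $R$. By Hurwitz's theorem the zeros of $Q_{n,m}$ converge to those of $Q_m$, so each $z_j$, $j>N$, attracts poles of the approximants; the point to be shown is that such a limit on $\Gamma_R$ cannot be a regular point of $f$, for otherwise $f$ would be meromorphic with at most $m-1$ poles in a disc strictly larger than $D_R$, contradicting the determinant asymptotics that pin $\deg Q_m=m$ and $Q_m(0)\neq0$. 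Squeezing the exact radii and the singular nature of the boundary zeros out of plain, rate-free convergence of the denominators is, as above, the heart of the difficulty, and it is the scalar instance of the phenomenon that the inverse results for incomplete Pad\'e approximants are meant to address.
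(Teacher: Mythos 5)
Your reduction via the telescoping identity is sound and is in fact the same starting point as the paper's (your formula is identity \eqref{tele} specialized to $d=1$, $m^*=m$, in which case $q^*_{n,m-m^*}$ is a constant), and your diagnosis that the naive bound on $c_n$ fails is also correct. But what you submit is a plan, not a proof: each of the three decisive steps --- the estimate $\limsup_n|c_n|^{1/n}\le 1/R$, the identification of $z_1,\dots,z_N$ as poles of $f$ with the right orders, and the singularity of $z_{N+1},\dots,z_m$ on $\Gamma_R$ --- is deferred to ``Hankel determinant asymptotics'' that you never establish, and you offer no mechanism by which the hypothesis (plain, rate-free convergence of ratios of $m\times m$ Hankel determinants) would force the extra decay of the $(m+1)\times(m+1)$ determinants. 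This is not a technical detail: the boundary statement alone contains Fabry's theorem \cite{Fabry} as the case $m=1$, and Hadamard's formulas \cite{Had}, being $\limsup$-type characterizations of radii, cannot by themselves single out which individual point of a circle is singular. So the step you yourself flag as ``the main obstacle'' is exactly the content of the theorem, and your proposal leaves it open.

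For comparison, the paper does not reprove Theorem \ref{Sueteo} (it is quoted from \cite{sue2}); the tool that closes precisely these gaps, both in Suetin's proof and in the paper's analogous Theorem \ref{teo:1} for incomplete Pad\'e approximants, is not determinant asymptotics but regularization of the sequence $(A_{n,m})$ of leading remainder coefficients: one constructs $(A^*_{n,m})$ with properties ${\rm i)}$--${\rm iv)}$, proves the uniform bounds \eqref{a} and \eqref{4} of Lemma \ref{lem:inc}, and then evaluates the telescoping identity at the zeros of $q_{n,m}$ (and of $p_{n,m}$) along the subsequence $\Lambda$ of ${\rm iv)}$. For instance, your key estimate follows in that framework from the first assertion of Theorem \ref{teo:1}: if some $z_k$ had $|z_k|>R^*_m(f)$, then the zeros $\zeta_n$ of $q_{n,m}$ attracted to it would satisfy $1\le C\,|q_{n+1,m}(\zeta_n)|\to|q_m(z_k)|=0$ (for Pad\'e approximants $q^*_{n,0}\equiv 1$, so \eqref{3} is impossible), a contradiction; hence $R\le R^*_m(f)=R_m(f)$ with no rate information ever needed. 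A similar evaluation argument, using the last part of Lemma \ref{lem:inc}, is what rules out that a limit point of poles on the boundary circle is a regular point. If you want to complete your proposal, this regularization technique (Agmon--Mandelbrojt, as adapted by Suetin in \cite{sue2}) is the missing ingredient; the determinant route, as it stands, assumes what is to be proved.
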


When $m=1$ it is easy to see from the definition that $Q_{n,1} = z -  (\phi_n/\phi_{n+1})$ whenever $\phi_{n+1} \neq 0$. Therefore, Suetin's theorem contains the classical theorem of E. Fabry \cite{Fabry} which states that $\lim_{n \to \infty} \phi_n/\phi_{n+1} = \zeta \neq 0$ implies that $R_0(f) = |\zeta|$ and $\zeta$ is a singular point of $f$.

Let us introduce the concept of system singularity of  $\bf f$ with respect to $\bf m$.

\begin{defi} \label{e-pole}
Given ${\bf f} = (f_1,\ldots,f_d)$ and ${\bf m} =
(m_1,\ldots,m_d)\in \mathbb{Z}_+^d \setminus \{{\bf 0}\}$ we say
that $\xi\in\mathbb{C}\setminus\{0\}$ is a system singularity  of ${\bf f}$ with respect to $\bf m$ if there exists at least one polynomial combination $F$ of
the form \eqref{combination} analytic on  $ {D}_{|\xi|}$ and $\xi$ is a singular point of $F$.
\end{defi}

We believe that the following result holds.

\begin{A}
\label{conjecture}
Assume that $Q_{n,\bf m}$ is unique for all sufficiently large $n$, \eqref{condfun} takes place, and let $Q_{|{\bf m}|}(\zeta) =0$. Then, $\zeta$ is a system singularity of $\bf f$ with respect to $\bf m$. If $\zeta \in D_1(F)$, for some polynomial combination $F$ determines the system singularity of $\bf f$ at $\zeta$, then $\zeta$ is a system pole of $\bf f$ with respect to $\bf m$ of order equal to the multiplicity of $\zeta$ as a zero of $Q_{|{\bf m}|}$.
\end{A}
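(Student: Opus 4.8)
The plan is to reduce the vector problem to a scalar one for incomplete Padé approximants and then to invoke inverse-type results for the latter. The starting observation is that the common denominator $Q_{n,\mathbf{m}}$ is simultaneously the denominator of an incomplete Padé approximant of every admissible polynomial combination. Fix polynomials $p_1,\dots,p_d$ with $\deg p_k<m_k$ and set $F=\sum_{k=1}^d p_k f_k$ and $P_n=\sum_{k=1}^d p_k P_{n,\mathbf{m},k}$. Multiplying condition a.2) of Definition \ref{defsimultaneos} by $p_k$ and summing gives
\begin{equation*}
Q_{n,\mathbf{m}}\,F-P_n=\sum_{k=1}^d p_k\bigl(Q_{n,\mathbf{m}}f_k-P_{n,\mathbf{m},k}\bigr)=O(z^{n+1}),
\qquad \deg P_n\le n-1,
\end{equation*}
so that $(P_n,Q_{n,\mathbf{m}})$ is an incomplete Padé approximant of $F$ whose type is governed by $\mathbf{m}$, and by \eqref{condfun} its denominator converges to $Q_{|\mathbf{m}|}$. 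Everything therefore hinges on a scalar question: if the denominators of a sequence of incomplete Padé approximants of a single $F$ converge to a polynomial of full degree not vanishing at the origin, what must hold at the zeros of the limit?

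I would isolate this as the key step and establish a scalar inverse theorem for incomplete Padé approximants in the spirit of Theorem \ref{Sueteo}: a limiting zero $\zeta$ of the (uniquely determined) denominators is a singular point of $F$ whenever $F$ is analytic on $D_{|\zeta|}$, and in the interior regime $\zeta\in D_1(F)$ it is a pole of $F$ whose order is controlled by the multiplicity of $\zeta$ as a zero of the limit. For the first assertion of the conjecture I would argue by contradiction. If $\zeta$, a zero of $Q_{|\mathbf{m}|}$, were not a system singularity, then every admissible combination analytic on $D_{|\zeta|}$ would continue analytically across $\zeta$. The uniqueness of $Q_{n,\mathbf{m}}$ for large $n$, transferred through the reduction above to the scalar level, would then make $\zeta$ a limiting zero of a full-degree, non-degenerate denominator of some $F$ that is nonetheless analytic at $\zeta$ — a spurious zero excluded by the scalar theorem. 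The contradiction produces a witnessing combination, i.e.\ a system singularity at $\zeta$.

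For the second assertion I would begin from a combination $F$ witnessing the system singularity at $\zeta$, and use the hypothesis $\zeta\in D_1(F)$ to guarantee that $\zeta$ is a genuine pole of $F$ rather than a branch point or an essential singularity. Since $Q_{n,\mathbf{m}}\to Q_{|\mathbf{m}|}$, the point $\zeta$ attracts exactly $\mu$ zeros of $Q_{n,\mathbf{m}}$, where $\mu$ is its multiplicity in $Q_{|\mathbf{m}|}$; the interior case of the scalar incomplete theorem should then tie the order of the pole of the relevant principal part to $\mu$. It remains to manufacture, for each $s=1,\dots,\mu$, an admissible combination analytic on a neighbourhood of $\overline{D}_{|\zeta|}$ except for a pole of exact order $s$ at $\zeta$, so that $\zeta$ qualifies as a system pole of order $\mu$. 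Here I would exploit the fact that the space of admissible combinations has dimension $|\mathbf{m}|$ and that $\zeta$ carries multiplicity $\mu$, expecting a suitable basis of combinations to realize every intermediate pole order.

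The main obstacle, and the reason the statement is posed as a conjecture rather than a theorem, is the control of spurious limiting zeros in the incomplete regime together with the faithful transmission of multiplicities. Unlike the situation in Theorem \ref{reciprocal}, no geometric rate of convergence is assumed a priori, so one cannot use Hadamard-type coefficient estimates to separate genuine poles from free zeros; one must instead show, by a Suetin–Fabry-type analysis of the denominators, that a zero of $Q_{|\mathbf{m}|}$ can never be cancelled by an accidental near-zero of the numerator combination and that its multiplicity is reproduced exactly in the order of the corresponding system pole. Proving the scalar incomplete Padé inverse theorem with precisely this rigidity — no spurious zeros and faithful multiplicities — and then selecting the combination that witnesses the singularity at $\zeta$, is where the real difficulty lies, and it is exactly the family of incomplete-Padé results announced in the abstract that is intended to close this gap.
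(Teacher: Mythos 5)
The statement you are addressing is posed in the paper as a \emph{conjecture}: the paper contains no proof of it, only a four-step outline (end of Section 3) of how a proof might go. Your proposal is, in essence, that same outline. Your reduction --- multiplying a.2) of Definition \ref{defsimultaneos} by $p_k$ and summing, so that $Q_{n,\mathbf{m}}$ becomes the denominator of an incomplete Pad\'e approximant (of type $(n,|\mathbf{m}|,1)$) of every admissible combination $F$ --- is exactly the mechanism by which the paper connects Section \ref{incompletos} to the conjecture, and the ``scalar inverse theorem in the spirit of Theorem \ref{Sueteo}'' that you say must be established is precisely the paper's Theorem \ref{teo:1}, proved there for incomplete Pad\'e approximants. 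The differences are minor: the paper proposes applying Theorem \ref{teo:1} to the components $f_k$ (each $R_{n,\mathbf{m},k}$ being incomplete of type $(n,|\mathbf{m}|,m_k)$) and then forming polynomial combinations, and it notes that one may reduce to $\mathbf{m}=(1,\dots,1)$; you apply the scalar result directly to the combinations, which forces $m^*=1$ and hence the weakest form of that theorem's conclusions.

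The genuine gap --- which you half-acknowledge in your final paragraph but which your contradiction argument for the first assertion quietly ignores --- is that Theorem \ref{teo:1} does \emph{not} exclude spurious zeros. Its conclusions are alternatives: either $\zeta$ is a pole/singularity of the scalar function, or $\zeta$ attracts zeros of the auxiliary polynomials $q^*_{n,m-m^*}$ appearing in the telescoping identity \eqref{tele}. With $m^*=1$ there are up to $|\mathbf{m}|-1$ such free zeros, so for any single combination $F$ analytic near $\zeta$ the theorem is perfectly consistent with $\zeta$ being a limit of denominator zeros; no contradiction results, and your claim that such a limiting zero is ``excluded by the scalar theorem'' is not what the theorem says. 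To prove the first assertion one must show that the spurious alternative cannot hold simultaneously for all combinations in the $|\mathbf{m}|$-dimensional space of admissible $(p_1,\dots,p_d)$; this is exactly what remains open, together with the multiplicity statement (Theorem \ref{teo:1} treats the boundary and counting conclusions only when the zeros of $q_m$ are distinct, and the first item in the paper's outline is precisely to remove that restriction, while the system-pole part is to come from Lemma \ref{cor;1}). So your proposal is a faithful reconstruction of the intended strategy, but, like the paper, it stops short of a proof at the same two points.
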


This conjecture applied to the scalar case reduces to Theorem \ref{Sueteo}.

In Section 2, we give a result similar to Theorem \ref{Sueteo} for so called incomplete Pad\'e approximation. Such approximants were introduced in \cite{CCL0} and used in \cite{CCL} to prove Theorem \ref{reciprocal}. In the final section we describe some steps which may lead to the proof of the conjecture.


\section{Incomplete Pad\'e approximants}\label{incompletos}

Consider the following construction.

\begin{defi}\label{defincompletos}
Let $f$  denote a formal Taylor expansion as in \eqref{i1}. Fix
$m\ge m^*\ge 1$. Let $n \geq m$. We say that the rational function
$r_{n,m}$ is an incomplete Pad\'e approximation of type $(n,m,{m^*})
$ corresponding to $f$ if $r_{n,m}$ is the quotient of any two
polynomials $p $ and $q $ that verify
\begin{itemize}
\item[c.1)]
$\deg p \le n-{m^*},\quad \deg q \le m,\quad q \not\equiv 0,$
\item[c.2)] $q(z) f(z)-p(z)=
A z^{n+1}+ \cdots .$
\end{itemize}
\end{defi}

Given $(n,m,{m^*}), n \geq m \geq {m^*},$  the
Pad\'e approximants $R_{n,{m^*}},\ldots,R_{n,m}$ can all be regarded
as incomplete Pad\'e approximation of type $(n,m,{m^*})$ of $f$.
From Definition \ref{defsimultaneos} and
(\ref{incomplete}) it follows that $R_{n,{\bf m},k}, k=1,\ldots,d,$
is an incomplete Pad\'e approximation of type $(n,|\mathbf{m}|,m_k)$
with respect to $f_k$.

In the sequel, for each $n \geq m \geq {m^*},$ we choose one incomplete Pad\'e approximant. After canceling out common factors between $q$ and $p$,
we write $ r_{n,m} = p_{n,m}/q_{n,m}, $ where, additionally,
$q_{n,m}$ is normalized as follows
\begin{equation} \label{normado} q_{n,m}(z) = \prod_{|\zeta_{n,k}|
\leq 1} \left(z - \zeta_{n,k}\right) \prod_{|\zeta_{n,k}| > 1}
\left(1- \frac{z}{\zeta_{n,k}}\right).
\end{equation}
Suppose that $q$ and $p$ have a
common zero at $z=0$ of order $\lambda_n$. Notice that $0 \leq \lambda_n \leq m$. From c.1)-c.2) it follows
that
\begin{itemize}
\item[c.3)] $ \deg p_{n,m} \leq n-m^*-\lambda_n, \quad
\deg q_{n,m} \leq m-\lambda_n, \quad q_{n,m} \not\equiv 0,$
\item[c.4)] $q_{n,m}(z) f(z)-p_{n,m}(z)=
A z^{n+1-\lambda_n}+ \cdots .$
\end{itemize}
where $A$ is, in general, a different constant from the one in c.2).

From Definition \ref{defincompletos} it readily follows that for each $n \ge m$
\begin{equation}
\label{tele}
 r_{n+1,m}(z) - r_{n,m}(z) = \frac{A_{n,m} z^{n+1-\lambda_n-
\lambda_{n+1}}q^*_{n,m-m^*}(z)}{q_{n,m}(z)q_{n+1,m}(z)},
\end{equation}
where $A_{n,m}$ is some constant and $q^*_{n,m-m^*}$ is a polynomial
of degree less than or equal to $m - m^*$ normalized as in \eqref{normado}.

The first difficulty encountered in dealing with inverse-type
results is to justify in terms of the data that the formal series
corresponds to an analytic element around the origin which does not reduce to a
polynomial. Set
\[ R_m^*(f) = \left(\limsup_{n \to \infty} |A_{n,m}|^{1/n}\right)^{-1}, \qquad D_m^*(f) = \{z: |z| < R_m^*(f)\}.
\]

Let $B$ be a subset of the complex plane $\mathbb{C}$. By
$\mathcal{U}(B)$ we denote the class of all coverings of $B$ by at
most a numerable set of disks. Set
$$
\sigma(B)=\inf\left\{\sum_{i=1}^\infty
|U_i|\,:\,\{U_i\}\in\mathcal{U}(B)\right\},
$$
where $|U_i|$ stands for the radius of the disk $U_i$. The quantity
$\sigma(B)$ is called the $1$-dimensional Hausdorff content of the
set $B$. In the papers we refer to below, the only properties used of the $1$-dimensional Hausdorff content follow easily from the definition. They are: subadditivity, monotonicity, and that the  $1$-dimensional Hausdorff content of a disk of radius $R$ and a segment of length $d$ are $R$ and $d/2$, respectively.

Let $(\varphi_n)_{n\in\mathbb{N}}$ be a sequence of functions
defined on a domain $D\subset\mathbb{C}$ and $\varphi$ another
function defined on $D$. We say that
$(\varphi_n)_{n\in\mathbb{N}}$ converges in $\sigma$-content to
the function $\varphi$ on compact subsets of $D$ if for each compact
subset $K$ of $D$ and for each $\varepsilon
>0$, we have
$$
\lim_{n\to\infty} \sigma\{z\in K :
|\varphi_n(z)-\varphi(z)|>\varepsilon\}=0.
$$
We denote this writing $\sigma$-$\lim_{n\to \infty} \varphi_n =
\varphi$ inside $D$.

Using telescopic sums, it is not difficult to prove the following (see \cite[Theorem 3.4]{CCL0}).

\begin{lem} \label{teo:5} Let $f$ be a formal power series as in \eqref{i1}. Fix $m$
and ${m^*}$ nonnegative integers, $m \geq {m^*}$. Let
$(r_{n,m})_{n \geq m}$ be a sequence of incomplete Pad\'e
approximants of type $(n,m,{m^*})$ for $f$. If ${R}_{m}^*(f) > 0$
then $R_0(f) > 0$. Moreover,
\[D_{m^*}(f) \subset D_m^*(f) \subset D_m(f)\]
and $D_{m}^*(f)$ is the largest disk in compact subsets of which $\sigma-lim_{n\to \infty} r_{n,m} = f$. Moreover,  the
sequence $(r_{n,m})_{n\ge m}$ is pointwise divergent in $\{z:|z| > R_m^*(f)\}$ except on a set of $\sigma$-content zero.
\end{lem}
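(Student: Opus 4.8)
The plan is to exploit the telescoping identity \eqref{tele} to express $r_{n,m}$ as a convergent series whose radius of convergence is controlled by $R_m^*(f)$, and then transfer convergence of partial sums into convergence in $\sigma$-content. First I would write, for $N > n \geq m$,
\[
r_{N,m}(z) - r_{n,m}(z) = \sum_{j=n}^{N-1} \bigl(r_{j+1,m}(z) - r_{j,m}(z)\bigr),
\]
and substitute \eqref{tele} for each term. Since $q^*_{n,m-m^*}$ has degree at most $m-m^*$ and all denominators $q_{n,m}, q_{n+1,m}$ are normalized as in \eqref{normado}, on any compact set $K \subset D_m^*(f)$ avoiding the (finitely many at each stage) zeros of the denominators one controls the moduli of the rational factors by fixed constants, so that the size of the $j$-th difference is governed by $|A_{j,m}|\,|z|^{j+1-\lambda_j-\lambda_{j+1}}$. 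Choosing $\rho < R_m^*(f)$ with $K \subset \overline{D}_\rho$, the definition of $R_m^*(f)$ gives $|A_{j,m}|^{1/j} \leq 1/\rho' + o(1)$ for some $\rho' > \rho$, so the tail $\sum_{j\geq n}$ decays geometrically off the exceptional zero-sets. This simultaneously shows that the formal limit $f = \lim_N r_{N,m}$ exists (hence $R_0(f) > 0$) and that $r_{n,m} \to f$ geometrically away from the poles.

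The main obstacle is that the zeros of $q_{n,m}$ move with $n$, so one cannot fix a single compact set on which all denominators stay bounded away from zero; this is exactly what forces the weaker $\sigma$-content conclusion rather than uniform convergence. The standard device here is to bound, for each $\varepsilon > 0$, the $\sigma$-content of the bad set $\{z \in K : |r_{n,m}(z) - f(z)| > \varepsilon\}$. On the complement of small disks around the zeros of the denominators, the geometric estimate above makes $|r_{n,m} - f|$ small; the union of those small disks has $\sigma$-content controlled by the sum of their radii, and since $\deg q_{n,m} \leq m$ there are at most $m$ of them. Using subadditivity and monotonicity of $\sigma$, together with the fact that a disk of radius $R$ has content $R$, one arranges these radii to shrink as $n \to \infty$ by a suitable choice (e.g. tying them to the geometric decay rate). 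This yields $\lim_n \sigma\{z \in K : |r_{n,m}(z) - f(z)| > \varepsilon\} = 0$, which is precisely $\sigma\text{-}\lim_n r_{n,m} = f$ inside $D_m^*(f)$.

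For the inclusions $D_{m^*}(f) \subset D_m^*(f) \subset D_m(f)$ and the maximality of $D_m^*(f)$, I would argue as follows. The rightmost inclusion follows because $\sigma$-convergence of rational functions with at most $m$ poles to $f$ forces $f$ to be meromorphic with at most $m$ poles on $D_m^*(f)$, whence $R_m^*(f) \leq R_m(f)$. The leftmost inclusion uses that the Padé approximants $R_{n,m^*}$ are themselves incomplete Padé approximants of type $(n,m,m^*)$, so their convergence region $D_{m^*}(f)$ is contained in the best such region $D_m^*(f)$. Maximality of $D_m^*(f)$ as the disk of $\sigma$-convergence, and the pointwise divergence outside $\overline{D}_{R_m^*(f)}$ off a set of $\sigma$-content zero, both follow from the geometric identity: on $\{|z| > R_m^*(f)\}$ the general term $r_{j+1,m}(z) - r_{j,m}(z)$ cannot tend to zero along a subsequence realizing the $\limsup$ in the definition of $R_m^*(f)$, since $|A_{j,m}|^{1/j}|z|$ exceeds $1$ there, and the exceptional set where the denominator geometry spoils this estimate again has $\sigma$-content zero.
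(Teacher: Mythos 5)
Your overall strategy --- telescoping \eqref{tele}, geometric decay from the definition of $R_m^*(f)$, and a covering of the zeros of the denominators by small disks whose radii are tied to that decay --- is indeed the route the paper points to (it cites \cite[Theorem 3.4]{CCL0}, proved ``using telescopic sums''). However, there are two genuine gaps. The first: nothing in your argument links the limit of the telescoping series to the formal series $f$. The identity \eqref{tele} together with the definition of $R_m^*(f)$ only shows that $(r_{n,m})_{n\ge m}$ is Cauchy in $\sigma$-content on compact subsets of $D_m^*(f)$, hence converges in content to \emph{some} function. Your sentence ``this simultaneously shows that the formal limit $f=\lim_N r_{N,m}$ exists (hence $R_0(f)>0$)'' assumes precisely what has to be proved. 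The only connection between $r_{n,m}$ and $f$ is the interpolation condition c.2) (equivalently c.3)--c.4)), which you never invoke: from $q_{n,m}(z)f(z)-p_{n,m}(z)=Az^{n+1-\lambda_n}+\cdots$ one gets that the Taylor coefficients of $r_{n,m}$ at the origin agree with the coefficients $\phi_k$ up to order roughly $n-m$, and one must then transfer this matching to the limit function (for instance via Cauchy integrals over small circles avoiding the exceptional sets, or via Gonchar's lemma \cite[Lemma 1]{gon1}, taking care of the possibility that $q_{n,m}$ has zeros at or accumulating to the origin). Only after that step can one conclude $R_0(f)>0$ and that the $\sigma$-limit is $f$ rather than an unrelated function.

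The second gap is the inclusion $D_{m^*}(f)\subset D_m^*(f)$, where your argument is not merely incomplete but wrong in principle. $R_m^*(f)$ is defined from the coefficients $A_{n,m}$ of the \emph{given} sequence $(r_{n,m})$; there is no ``best such region'' over all admissible choices. That the classical Pad\'e approximants $R_{n,m^*}$ happen to form another sequence of incomplete Pad\'e approximants of type $(n,m,m^*)$ and converge on $D_{m^*}(f)$ by Montessus' theorem gives no bound whatsoever on the numbers $A_{n,m}$ attached to your particular sequence. What is needed is a direct-type estimate: assuming $R_{m^*}(f)>0$, use the meromorphy of $f$ with at most $m^*$ poles in $D_{m^*}(f)$, together with the interpolation conditions, to prove $\limsup_{n\to\infty}|A_{n,m}|^{1/n}\le 1/R_{m^*}(f)$; this is how the cited proof proceeds. (Your treatment of the remaining points --- the inclusion $D_m^*(f)\subset D_m(f)$ via meromorphic limits of rational functions with at most $m$ poles, and the pointwise divergence on $\{|z|>R_m^*(f)\}$ off a set of content zero --- is right in spirit, though the divergence argument also needs a summable choice of exceptional radii, in the manner of a Borel--Cantelli argument for Hausdorff content.)
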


We also have (see \cite[Corollaries 2.4, 2.5]{CCL})

\begin{lem} \label{cor;1} Let $f$ be a formal power series
as in \eqref{i1}. Fix $m \geq m^* \geq 1$. Assume that there exists
a polynomial ${q}_m$ of degree greater than or equal to
 $ m-m^*+1,\,{q}_m(0) \neq 0,$ such that $\lim_{n
\to \infty} {q}_{n,m} = {q}_m$. Then $0<R_0(f)<\infty$ and the zeros
of ${q}_{m}$ contain all the poles, counting multiplicities, that
$f$ has in $D^*_{m}(f)$.
\end{lem}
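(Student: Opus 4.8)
The plan is to translate the conclusion into an analyticity statement and then exploit convergence in Hausdorff content. Since every zero of $q_m$ that is not a pole of $f$ is harmless, the assertion that ``the zeros of $q_m$ contain all the poles, counting multiplicities, that $f$ has in $D_m^*(f)$'' is equivalent to saying that the product $q_m f$ extends analytically to $D_m^*(f)$: at a pole $\xi\in D_m^*(f)$ of $f$ of order $\tau$, the function $q_m f$ is analytic at $\xi$ precisely when $q_m$ vanishes there to order at least $\tau$. Thus the entire problem reduces to showing that $q_m f$ has no poles in $D_m^*(f)$ (which, incidentally, forces $f$ to have at most $\deg q_m$ poles there). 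Throughout I work in the substantive case $R_m^*(f)>0$, where by Lemma \ref{teo:5} we have $R_0(f)>0$, $D_m^*(f)\subset D_m(f)$, and $f$ is meromorphic with at most $m$ poles on $D_m^*(f)$.

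First I would pass from the $r_{n,m}$ to the polynomials $p_{n,m}=q_{n,m}\,r_{n,m}$. By Lemma \ref{teo:5}, $\sigma\text{-}\lim_{n\to\infty} r_{n,m}=f$ on compact subsets of $D_m^*(f)$, and by hypothesis $q_{n,m}\to q_m$ uniformly on compact sets with $\|q_{n,m}\|$ bounded. Writing
\[
p_{n,m}-q_m f=q_{n,m}\,(r_{n,m}-f)+(q_{n,m}-q_m)\,f,
\]
the first term tends to $0$ in $\sigma$-content (a uniformly bounded factor times a content-null sequence), while the second tends to $0$ uniformly away from the finitely many poles of $f$, whose small neighborhoods carry arbitrarily small content. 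By subadditivity and monotonicity of $\sigma$ this yields $\sigma\text{-}\lim_{n\to\infty}p_{n,m}=q_m f$ on compact subsets of $D_m^*(f)$. Hence $q_m f$, a priori meromorphic on $D_m^*(f)$, is realized as a $\sigma$-content limit of the \emph{pole-free} functions $p_{n,m}$.

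The heart of the matter, and the step I expect to be the main obstacle, is a rigidity principle: a function meromorphic on a disk cannot be the $\sigma$-content limit, on compact subsets, of a sequence of polynomials unless it is in fact analytic. I would argue by contradiction. If $q_m f$ had a pole at some $\xi\in D_m^*(f)$, I would fix a small circle $\Gamma=\{|z-\xi|=\delta\}$ enclosing no other pole, on which $|q_m f|\ge M$ with $M$ as large as desired. Content convergence together with subadditivity guarantees that, for all large $n$, the inequality $|p_{n,m}|\ge M/2$ holds on a subset of $\Gamma$ of nearly full content; a Cartan-type lower bound, using only the three recalled properties of $\sigma$, then propagates this largeness, off an exceptional set of small content, into the enclosed disk, where it contradicts the content convergence of $p_{n,m}$ to the finite values of $q_m f$ on an interior circle around a regular point. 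This rules out poles and gives that $q_m f$ is analytic throughout $D_m^*(f)$, which is the desired inclusion of poles among the zeros of $q_m$.

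Finally, the degree hypothesis disposes of the remaining assertions and explains its own necessity. Since $m\ge m^*\ge1$ we have $\deg q_m\ge m-m^*+1\ge1$, so $q_m$ is nonconstant with $q_m(0)\neq0$. The telescoping identity \eqref{tele} has numerator polynomial $q^*_{n,m-m^*}$ of degree at most $m-m^*$, so at most $m-m^*$ of the zeros attracted by those of $q_m$ can be spurious (limits of the $q^*$-zeros that cancel a denominator zero); the surplus guaranteed by $\deg q_m\ge m-m^*+1$ then forces at least one zero $\zeta\neq0$ of $q_m$ to be a genuine singular point of $f$, whence $R_0(f)\le|\zeta|<\infty$. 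The positivity $R_0(f)>0$ I would deduce from Lemma \ref{teo:5} once $R_m^*(f)>0$ is in hand, the latter following from the geometric smallness near the origin of the increments \eqref{tele} that the convergence $q_{n,m}\to q_m$ with $q_m(0)\neq0$ imposes (equivalently, the relations c.4) force the Taylor coefficients of $q_m f$, and hence of $f=q_mf/q_m$, to grow at most geometrically). I expect the content-rigidity step to carry all the difficulty, the rest being bookkeeping around the convergence hypothesis and the degree count.
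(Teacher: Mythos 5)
Your reduction of the pole-containment claim to the analyticity of $q_m f$ on $D_m^*(f)$, and your proof that $\sigma$-$\lim_{n\to\infty}p_{n,m}=q_m f$ on compact subsets of $D_m^*(f)$ via the splitting $p_{n,m}-q_m f=q_{n,m}(r_{n,m}-f)+(q_{n,m}-q_m)f$, are both sound. The genuine gap is in the step you yourself call the heart of the matter. The ``rigidity'' principle you need is true, but the mechanism you propose for it is false: analytic functions obey a maximum principle, not a minimum principle, so largeness of $|p_{n,m}|$ on (most of) a circle $\Gamma$ does \emph{not} propagate into the enclosed disk. Concretely, $p_n(z)=z^n$ has modulus exactly $1$ on the whole unit circle, yet $|p_n|<\varepsilon$ on the disk $\{|z|\le \varepsilon^{1/n}\}$, whose $\sigma$-content tends to $1$; and no Cartan-type estimate rescues this, since Cartan's lower bounds degrade with the degree and the leading coefficient, both uncontrolled here ($\deg p_{n,m}$ grows like $n$). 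The correct mechanism runs in the opposite direction: around an alleged pole $\xi$ of $q_m f$ take an annulus on which $q_m f$ is bounded by $M$; the content estimate lets you choose, for each large $n$, a circle inside that annulus avoiding the exceptional set, so $|p_{n,m}|\le M+1$ there and hence, by the maximum principle, on the whole enclosed disk containing $\xi$; this contradicts content convergence on a small circle around $\xi$ where $|q_m f|\ge M+3$. This upper-bound propagation is exactly Gonchar's lemma, \cite[Lemma 1]{gon1}, which the paper invokes in the proof of Theorem \ref{teo:1}; you could simply cite it rather than reprove it, incorrectly, with inequalities pointing the wrong way.

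The part $0<R_0(f)<\infty$ is also not established by your sketch. For $R_0(f)>0$ (equivalently, for getting $R_m^*(f)>0$ so that Lemma \ref{teo:5} applies at all), the claim that c.4) forces at most geometric growth of the $\phi_\nu$ requires the windows of vanishing coefficients of $q_{n,m}f$, namely the indices $n-m^*-\lambda_n<\nu\le n-\lambda_n$, to cover \emph{all} large $\nu$; this is precisely where the hypothesis $\deg q_m\ge m-m^*+1$ must enter (it forces $\lambda_n\le m^*-1$ for large $n$, because $\deg q_{n,m}\le m-\lambda_n$ cannot converge to a polynomial of higher degree), a point absent from your argument, without which the recursion has gaps and gives no bound. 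For $R_0(f)<\infty$, your surplus-counting argument borrows the final statement of Theorem \ref{teo:1}, but that statement is proved there only when the zeros of $q_m$ are distinct (removing this restriction is explicitly listed in Section \ref{simultaneos} as an unresolved step), and, worse, Theorem \ref{teo:1} presupposes $0<R_m^*(f)<\infty$, while in the very case you must exclude ($f$ entire) one has $D_{m^*}(f)=\mathbb{C}$ and hence $R_m^*(f)=\infty$ by Lemma \ref{teo:5}; so the tool is unavailable exactly where it is needed. Note, finally, that the paper itself gives no proof of this lemma but cites \cite[Corollaries 2.4, 2.5]{CCL}; the two places where your proposal is vague or wrong are precisely where the substance of that reference lies.
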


Suppose that $\limsup_n |A_{n,m}|^{1/n} = 1$. It is known,  that there exists a regularizing sequence $(A_{n,m}^*)_{n \geq m}$ such that:
\begin{itemize}
\item[i)] $\lim_{n\to \infty}A_{n,m}^*/A_{n+1,m}^* = 1$,
\item[ii)] $\{\log(A_{n,m}^*/n!)\}$ is concave,
\item[iii)] $|A_{n,m}| \leq |A_{n,m}^*|, n \in \mathbb{Z}_+$,
\item[iv)] $|A_{n,m}| \geq c|A_{n,m}^*|, n \in \Lambda \subset \mathbb{Z}_+, c > 0$ for an infinite sequence $\Lambda$.
\end{itemize}

The use of such regularizing sequences is well established in the study of singularities of Taylor series (see, for example, \cite{Agm} and \cite{Man}). Its use was extended by S.P. Suetin in \cite{sue2} to Pad\'e approximation for proving Theorem \ref{Sueteo}. The proofs of  \cite[Lemmas 1, 2]{sue2} (see also \cite[Chapter 1]{sue3})  may be easily adjusted to produce the following result for incomplete Pad\'e approximation.

\begin{lem} \label{lem:inc} Let $f$ be a formal power series
as in \eqref{i1}. Fix $m \geq m^* \geq 1$. Assume that $\lim_{n \to \infty} |A_{n,m}|^{1/n} = 1$. For any $\delta > 0$
\begin{equation} \label{a}
\max_{|z| \geq e^{\delta}} |p_{n,m}(z)/(A_{n,m}^* z^n)| = \mathcal{O}(1), \qquad n \to \infty.
\end{equation}
For every compact $K \subset \{z:|z| < e^{-\delta}\} \setminus \mathcal{P}(f)$,
\begin{equation} \label{4} \max_K |(q_{n,m}f - p_{n,m})(z)/(A_{n,m}^* z^n)| = \mathcal{O}(1), \qquad n \to \infty.
\end{equation}
Assume that there exists a polynomial $q_m, \deg q_m = m, q_m(0) \neq 0,$ such that
\[ \lim_{n \to \infty} q_{n,m} = q_m.
\]
Let $f$ be holomorphic in some region $G \supset D_m^*(f) \setminus \mathcal{P}(f)$. Then, for every compact $K \subset G$, \eqref{4} takes place.
\end{lem}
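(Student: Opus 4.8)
The plan is to transcribe the proofs of Suetin's Lemmas~1 and~2 in \cite{sue2} (see also \cite[Chapter~1]{sue3}) to the present situation: Suetin's complete-Pad\'e numerator and denominator are replaced by $p_{n,m}$ and $q_{n,m}$, the normalization \eqref{normado} keeps $\{q_{n,m}\}$ under control, and \eqref{tele} plays the role of the analogous identity for the ordinary Pad\'e table. Since $\lim_n|A_{n,m}|^{1/n}=1$ we have $R^*_m(f)=1$, $D^*_m(f)=D_1$, and the regularizing sequence from i)--iv) inherits $\lim_n|A^*_{n,m}|^{1/n}=1$. The engine is this sequence: iii)--iv) compare it with the genuine coefficients $A_{n,m}$, while i)--ii) (concavity) make the coefficients of $q_{n,m}f$ at neighbouring diagonal positions mutually comparable, so that the series that arise can be summed.

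First I would make explicit what the two estimates are about at the level of Taylor coefficients. The gap in c.4) forces $p_{n,m}$ to be precisely the Taylor polynomial of $q_{n,m}f$ of degree $n-m^*-\lambda_n$, so its coefficients are the coefficients of $q_{n,m}f$ lying \emph{below} the gap, whereas $q_{n,m}f-p_{n,m}=\sum_{j\ge n+1-\lambda_n}c_{j,n}\,z^{j}$ collects those lying \emph{above} it, with $c_{n+1-\lambda_n,\,n}=A_{n,m}$. Thus \eqref{a} is a statement about the coefficients of $q_{n,m}f$ of order $\le n-m^*-\lambda_n$ and \eqref{4} about those of order $\ge n+1-\lambda_n$, in both cases weighed against $A^*_{n,m}$. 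The role of \eqref{normado} is to make $\{q_{n,m}\}$ a family of polynomials of degree $\le m$ with uniformly bounded coefficients, so that the size of the coefficients of $q_{n,m}f$ is governed entirely by the location of the singularities of $q_{n,m}f$.

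This last remark is the crux. The only singularities of $q_{n,m}f$ in $D_1$ are the poles of $f$ there, and $q_{n,m}$ is built so as to \emph{approximately cancel} them; it is this near-cancellation that pushes the radius of analyticity of $q_{n,m}f$ out towards $\Gamma_1$ and keeps the coefficients $c_{j,n}$ subexponential. Granting it, the coefficient estimates proceed as in Suetin: for \eqref{4} one writes $R_n(z)/(A^*_{n,m}z^n)=\sum_{i\ge 1-\lambda_n}(c_{n+i,\,n}/A^*_{n,m})z^{i}$, bounds the leading term by iii) and dominates the tail by a convergent geometric series using i)--ii), uniformly on $K\subset\{|z|<e^{-\delta}\}\setminus\mathcal{P}(f)$; for \eqref{a} one observes that on $|z|\ge e^{\delta}$ division by $z^{n}$ introduces the suppressing factors $z^{-(n-j)}$, with exponent at most $-(m^*+\lambda_n)$ at the top, after which the same comparison of the coefficients of $q_{n,m}f$ with $A^*_{n,m}$ closes the estimate. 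Establishing the near-cancellation rigorously, and tracking the extra integer $\lambda_n$ and the variable degree of $q_{n,m}$ through Suetin's inequalities, is the point I expect to require the most care.

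For the final assertion the hypotheses $\lim_n q_{n,m}=q_m$, $\deg q_m=m$, $q_m(0)\ne0$ resolve exactly this difficulty: the denominators then converge to a fixed polynomial whose zeros contain the poles of $f$ in $D^*_m(f)$ (Lemma~\ref{cor;1}), so the cancellation is not merely approximate but has an explicit limit, and $q_{n,m}$ is bounded away from $0$ on every compact set avoiding $\mathcal{P}(f)$. To pass from $D^*_m(f)$ to the larger region $G$ where $f$ is holomorphic off its poles, I would represent $R_n$ by a Cauchy integral over $\Gamma_r$ with $r<R_0(f)$ and deform the contour outward through $G$; since $f$ has in $G$ no singularities other than those in $\mathcal{P}(f)$ and the zeros of $q_{n,m}$ cluster only there, the deformation produces no uncontrolled residues and \eqref{4} persists on each compact $K\subset G$.
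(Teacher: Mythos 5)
Your top-level plan (adapt Suetin's Lemmas 1 and 2 from \cite{sue2}, with $p_{n,m}$, $q_{n,m}$, \eqref{normado} and \eqref{tele} playing the roles of their Pad\'e counterparts) coincides with what the paper does, since the paper offers no written proof beyond the remark that Suetin's arguments ``may be easily adjusted''. The problem is the concrete mechanism you commit to for \eqref{4}: it cannot work on the stated class of compacta. You propose to bound $(q_{n,m}f-p_{n,m})(z)/(A^*_{n,m}z^n)$ on $K\subset\{|z|<e^{-\delta}\}\setminus\mathcal{P}(f)$ by expanding the remainder in its Maclaurin series $\sum_{j\ge n+1-\lambda_n}c_{j,n}z^j$ and summing coefficient bounds term by term. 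But $q_{n,m}f-p_{n,m}$ has genuine poles at every pole of $f$ in $D_1$ that $q_{n,m}$ does not cancel \emph{exactly}; near-cancellation (zeros of $q_{n,m}$ merely approaching the poles) does not remove them. Hence for fixed $n$ the radius of convergence of that series is at most the modulus $\rho<e^{-\delta}$ of the nearest such pole, and $|c_{j,n}|\sim\varepsilon_n\rho^{-j}$ as $j\to\infty$, where $\varepsilon_n$ is small (essentially $|q_{n,m}|$ evaluated near the pole times the residue of $f$ there) but positive. The coefficients are therefore \emph{not} subexponential in $j$, no matter how good the cancellation is, and at points of $K$ with $|z|>\rho$ the series diverges absolutely, so term-wise estimates say nothing about the function there. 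Concretely, for $f(z)=1/(z-a)+1/(z-1)$ with $0<a<e^{-\delta}$ and $m=m^*=1$, estimate \eqref{4} is true on a compact neighborhood of any $b$ with $a<|b|<e^{-\delta}$, yet $\sum_j|c_{j,n}|\,|b|^j=\infty$ for every $n$. This is exactly why the estimate on such compacta must be obtained function-theoretically (maximum-principle type arguments for the remainder itself, as in Suetin); your coefficient computation only reaches compacta inside the disk of holomorphy of $f$.

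Two further steps have gaps of the same nature. First, $c_{n+1-\lambda_n,n}$ is the constant $A$ of c.4), \emph{not} $A_{n,m}$: the latter is defined through the difference identity \eqref{tele}, and the paper explicitly warns that these constants differ. Relating them involves factors such as $q_{n+1,m}(0)$ and $q^*_{n,m-m^*}(0)$, which can vanish, so the hypothesis $\lim_n|A_{n,m}|^{1/n}=1$ does not transfer for free to the leading remainder coefficients on which your argument runs; that transfer is itself part of what must be proved. Second, in the final assertion your Cauchy-integral deformation from $\Gamma_r$, $r<R_0(f)$, outward to $K\subset G$ is obstructed by the same poles: $q_{n,m}f-p_{n,m}$ is holomorphic only on $D_0(f)\cup G$, a domain with holes at $\mathcal{P}(f)$, so any admissible cycle representing the Cauchy integral must include small circles about those holes, and the corresponding terms are of size $|q_{n,m}(\zeta)|$ times the residue data of $f$ at $\zeta$. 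The hypothesis $\lim_n q_{n,m}=q_m$ carries no rate, hence gives no control of these terms relative to $|A^*_{n,m}\zeta^n|$. In short, the quantitative decay $|q_{n,m}(\zeta)|\lesssim|A^*_{n,m}\zeta^n|$ at the poles --- the ``near-cancellation'' you defer as a detail --- is the actual analytic content of the lemma, and your two main mechanisms either presuppose it or, in the case of the series argument for \eqref{4}, could not exploit it even if it were known.
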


In the sequel $\mbox{dist}(\zeta,B_n)$ denotes the distance from a point $\zeta$ to a set $B_n$. Let $\mathcal{P}_{n,m}(f) = \{\zeta_{n,1},\ldots,\zeta_{n,m_n}\}$ be the set of zeros of $q_{n,m}$ and the points are enumerated so that
\[ |\zeta_{n,1}-\zeta| \leq \cdots \leq |\zeta_{n,m_n}- \zeta|.\]
We say that $\lambda = \lambda(\zeta)$ points of $\mathcal{P}_{n,m}$ tend to $\zeta$ if
\[ \lim_{n \to \infty} |\zeta_{n,\lambda}-\zeta| = 0, \qquad \limsup_{n \to \infty} |\zeta_{n,\lambda+1}-\zeta| > 0.
\]
By convention, $\limsup_{n \to \infty} |\zeta_{n,\kappa}-\zeta| > 0$ for $\kappa > \liminf_{n \to \infty} m_n .$

\begin{thm} \label{teo:1} Let $f$ be a formal power series
as in \eqref{i1}. Fix $m \geq m^* \geq 1$. Assume that $0 < R_m^*(f) < +\infty$. Suppose that
\[ \lim_{n \to \infty} \mbox{\rm dist} (\zeta,\mathcal{P}_{n,m}(f)) = 0.
\]
Let $\mathcal{Z}_n(f)$ be the set of zeros of $q^*_{n,m-m^*}$. If $|\zeta| > R_m^*(f)$, then
\begin{equation} \label{3} \lim_{n \in \Lambda} \mbox{\rm dist} (\zeta,\mathcal{Z}_n(f)) = 0
\end{equation}
where $\Lambda$ is any infinite sequence of indices verifying ${\rm iv)}$ in the  regularization of $(A_{n,m})_{n\geq m}$.
If $|\zeta| < R_m^*(f)$ , then either \eqref{3} takes place or $\zeta$ is a pole of $f$ of order greater or equal to $\lambda(\zeta)$.
If $\lim_{n \to \infty} q_{n,m} = q_m, \deg q_m = m, q_m(0) \neq 0,$ and $|\zeta| = R_m^*(f)$ then we have either \eqref{3} or $\zeta$ is a singular point.  If the zeros of $q_m$ are distinct then at least $m^*$ of them are singular points of $f$ and lie in the closure of $D_m^*(f)$, those lying in $D_m^*(f)$ are simple poles.
\end{thm}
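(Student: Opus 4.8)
The plan is to exploit the telescopic identity \eqref{tele} together with the regularized asymptotics from Lemma \ref{lem:inc}, following the method Suetin used in \cite{sue2}. The central object is the sequence of differences $r_{n+1,m}-r_{n,m}$, whose numerator carries the factor $q^*_{n,m-m^*}$ and the constant $A_{n,m}$. Writing out \eqref{tele} and dividing through by $A_{n,m}^* z^n$, I would track where the left-hand side is controlled (via the bounds \eqref{a} and \eqref{4}) and where it is forced to be large. The key idea is that the zeros $\mathcal{Z}_n(f)$ of $q^*_{n,m-m^*}$ are precisely the points where the numerator of the telescopic difference vanishes, so showing that $\zeta$ attracts a zero of $q^*_{n,m-m^*}$ amounts to showing that the telescopic difference cannot remain uniformly small in a full neighborhood of $\zeta$ along the sequence $\Lambda$.

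I would first treat the case $|\zeta|>R_m^*(f)$. By hypothesis $\mbox{\rm dist}(\zeta,\mathcal{P}_{n,m}(f))\to 0$, so the denominators $q_{n,m}q_{n+1,m}$ nearly vanish at $\zeta$; meanwhile, outside $D_m^*(f)$ the series diverges in $\sigma$-content by Lemma \ref{teo:5}, which forces $|A_{n,m}|^{1/n}$ to reflect the radius $R_m^*(f)$, so on the sequence $\Lambda$ (where $|A_{n,m}|\ge c|A_{n,m}^*|$) the constant is genuinely as large as the regularization permits. Combining these, the only way the telescopic difference can have the growth dictated by \eqref{a} and still vanish to the required order at the poles of $q_{n,m}$ is for the factor $q^*_{n,m-m^*}$ to supply a compensating zero near $\zeta$; this yields \eqref{3}. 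For $|\zeta|<R_m^*(f)$ I would argue by dichotomy: if $\zeta$ does \emph{not} attract zeros of $q^*_{n,m-m^*}$, then the numerator of \eqref{tele} stays bounded away from zero near $\zeta$, and the convergence of $r_{n,m}$ in $\sigma$-content inside $D_m^*(f)$ together with the estimate \eqref{4} forces the limit function $f$ to have a pole at $\zeta$ whose order is at least the number $\lambda(\zeta)$ of denominator zeros collapsing to $\zeta$ (using Lemma \ref{cor;1} to identify these zeros with genuine poles).

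For the boundary case $|\zeta|=R_m^*(f)$ under the additional hypothesis $\lim_n q_{n,m}=q_m$ with $\deg q_m=m$ and $q_m(0)\neq0$, I would combine the last assertion of Lemma \ref{lem:inc} (which extends \eqref{4} to compacts in a holomorphy region $G\supset D_m^*(f)\setminus\mathcal{P}(f)$) with the divergence statement of Lemma \ref{teo:5} on $\{|z|>R_m^*(f)\}$. The same dichotomy gives \eqref{3} or singularity at $\zeta$. To count singular points when the zeros of $q_m$ are distinct, I would note that $q^*_{n,m-m^*}$ has degree at most $m-m^*$, so at most $m-m^*$ of the $m$ limit zeros of $q_{n,m}$ can be attracted as zeros of $q^*_{n,m-m^*}$; the remaining at least $m^*$ limit zeros must therefore fall into the singularity alternative, and by Lemma \ref{teo:5} they lie in $\overline{D_m^*(f)}$, with those strictly inside being poles by the $\sigma$-content convergence, necessarily simple because the limit $q_m$ has only simple zeros.

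The main obstacle I anticipate is the rigorous passage from the telescopic estimate to the dichotomy in the regime $|\zeta|<R_m^*(f)$: one must show that failure of \eqref{3} genuinely \emph{forces} a pole of the correct order, rather than merely permitting one, and this requires carefully converting the $\sigma$-content convergence of $r_{n,m}$ into a pointwise statement near $\zeta$ while simultaneously controlling the denominators $q_{n,m}$ whose zeros are themselves migrating toward $\zeta$. Balancing the lower bound on $|A_{n,m}|$ along $\Lambda$ against the upper bounds valid for all $n$, without losing the order count $\lambda(\zeta)$, is the delicate point, and it is exactly here that the regularizing properties i)--iv) of $(A_{n,m}^*)$ are indispensable.
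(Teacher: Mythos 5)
Your overall framework (the telescopic identity \eqref{tele} plus the regularized bounds of Lemma \ref{lem:inc}, in the spirit of Suetin) is the same as the paper's, but the proposal has genuine gaps at exactly the two points where the work happens. The mechanism that produces attraction of zeros of $q^*_{n,m-m^*}$ is not ``showing the telescopic difference cannot remain uniformly small in a neighborhood of $\zeta$''; indeed \eqref{a} bounds $p_{n,m}$, not the difference, and near the migrating zeros of $q_{n,m}$ the difference is in any case large. What the paper does is cross-multiply \eqref{tele} into the polynomial identity
\[
p_{n+1,m}\,q_{n,m}-p_{n,m}\,q_{n+1,m}=A_{n,m}z^{n+1-\lambda_n-\lambda_{n+1}}q^*_{n,m-m^*}
\]
and evaluate it at a zero $\zeta_n\to\zeta$ of $q_{n,m}$, which kills the first term and gives
\[
p_{n,m}(\zeta_n)/(A_{n,m}\zeta_n^n)=-\zeta_n^{1-\lambda_n-\lambda_{n+1}}\,q^*_{n,m-m^*}(\zeta_n)/q_{n+1,m}(\zeta_n).
\]
The bound \eqref{a} (or \eqref{4} inside the disk, which is where regularity of $\zeta$ enters) combined with property iv) then yields $|q^*_{n,m-m^*}(\zeta_n)|\leq C\,|q_{n+1,m}(\zeta_n)|$ for $n\in\Lambda$; since $q_{n+1,m}$ also has zeros approaching $\zeta$ and is uniformly bounded by the normalization \eqref{normado}, the right-hand side tends to zero, which is \eqref{3}. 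This evaluation-at-zeros step is absent from your sketch, so the first case remains heuristic.

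More seriously, you concede that you cannot show failure of \eqref{3} forces a pole of order at least $\lambda(\zeta)$, and that is precisely the heart of the interior case; moreover your appeal to Lemma \ref{cor;1} there is unjustified, since that lemma requires $\lim_n q_{n,m}=q_m$, which is not assumed when $|\zeta|<R_m^*(f)$. The paper's route is: failure of \eqref{3} makes $\zeta$ singular (contrapositive of the argument above), hence a pole because $D_m^*(f)\subset D_m(f)$ by Lemma \ref{teo:5}; then, supposing the order $\tau<\lambda(\zeta)$, the surplus of denominator zeros together with $\sigma$-convergence forces zeros $\eta_n$ of $p_{n,m}$ with $\eta_n\to\zeta$, one evaluates the same polynomial identity at $\eta_n$ (now using \eqref{4} applied to $F=(z-\zeta)^\tau f$) to obtain $|q^*_{n,m-m^*}(\eta_n)|\leq C\,|p_{n+1,m}(\eta_n)|$, and one reaches a contradiction because along a subsequence $p_{n+1,m}$ converges uniformly near $\zeta$ (normal families from \eqref{normado}, plus Gonchar's lemma \cite[Lemma 1]{gon1} upgrading $\sigma$-convergence to uniform convergence) to a function vanishing at $\zeta$, while failure of \eqref{3} keeps $q^*_{n,m-m^*}(\eta_n)$ bounded away from zero. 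This contradiction argument is the missing idea your last paragraph flags but does not supply. Your final counting argument (at most $m-m^*$ attracted zeros, hence at least $m^*$ singular ones in $\overline{D_m^*(f)}$, simple poles inside) does match the paper's, though the simplicity rests on Lemma \ref{cor;1}, not merely on $\sigma$-content convergence.
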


\begin{proof} Without loss of generality, we can assume that $R_m^*(f) = 1$. The general case reduces to it with the change of variables $z \to z/R_m^*(f)$.
Assume that  $|\zeta| \neq 1$ and $\zeta$ is a regular point of $f$ should  $|\zeta| < 1$. Choose $\delta > 0$ such that $|\zeta| > e^{\delta}$ or $|\zeta| < e^{-\delta}$ depending on whether $|\zeta| > 1$ or $|\zeta| < 1$, respectively. Let $q_{n,m}(\zeta_n) = 0, \lim_{n \to \infty} \zeta_n = \zeta$.

Evaluating at $\zeta_n$, using \eqref{a}, if  $|\zeta| > 1$ or \eqref{4}, when  $|\zeta| < 1$, and taking ${\rm iv)}$ into account, it follows  that
\[ |p_{n,m}(\zeta_n)/(A_{n,m} \zeta_n^n)| \leq C_1,  \qquad n \geq n_0, \qquad n \in \Lambda,
\]
where $C_1$ is some constant and $\Lambda$ is the sequence of indices which appears in the regularization of $(A_{n,m})_{n \geq m}$. (In the sequel $C_1, C_2,\ldots$ denote constants which do not depend on $n$.) However, from \eqref{tele} it follows that
\[ p_{n,m}(\zeta_n)/(A_{n,m} \zeta_n^n) = -\zeta_n^{1 -\lambda_n - \lambda_{n+1}} q^*_{n,m-m^*}(\zeta_n)/q_{n+1,m}(\zeta_n),
\]
which combined with the previous inequality gives
\[ |q^*_{n,m-m^*}(\zeta_n)| \leq C_2 |q_{n+1,m}(\zeta_n)|, \qquad n \geq n_0, \qquad n \in \Lambda.
\]
Therefore, \eqref{3} takes place.

If $|\zeta| = 1$ and $\zeta$ is a regular point the proof of \eqref{3} is the same as for the case when $|\zeta| < 1$. In this case use \eqref{4} on a closed neighborhood of $\zeta$ contained in $G \supset D_m^*(f) \setminus \mathcal{P}(f)$.

Now, assume that $|\zeta| < 1$ and $\limsup_{n \in \Lambda} \mbox{\rm dist} (\zeta,\mathcal{Z}_n(f)) > 0$. Then, $\zeta$ is a singular point of $f$. Since  $ D_m^*(f)\subset D_m(f)$ according to Lemma \ref{teo:5},  $\zeta$ must be a pole of $f$. Let $\tau$ be the order of the pole of $f$ at $\zeta$.  Set $w(z) = (z-\zeta)^{\tau}$ and $F = w f$. Notice that $F(\zeta) \neq 0$. Using \eqref{4} and $\rm iv)$, it follows that there exists a closed disk $U_r$ centered at $\zeta$ of radius $r$ sufficiently small so that
\begin{equation} \label{4^*} \max_{U_r} |(q_{n,m}F - p_{n,m} w)(z)/(A_{n,m} z^n)| \leq C_3.\qquad n \geq n_0, \qquad n \in \Lambda.
\end{equation}

Suppose that $\tau  < \lambda(\zeta)$. Since $\sigma-lim_{n\to \infty} r_{n,m} = f$ (see Lemma \ref{teo:5}), it follows that for each $n \in \mathbb{Z}_+$ there exists a zero of $\eta_n$ of $p_{n,m}$ such that $\lim_{n \to \infty} \eta_n = \zeta$. Take $r > 0$ sufficiently small so that $\min_{U_r} |F(z)| >0$. Substituting $\eta_n$ in \eqref{4^*}, we have
\[|q_{n,m}(\eta_n)/(A_{n,m} \eta_n^n)| \leq C_4, \qquad n \geq n_0, \qquad n \in \Lambda, \]
and taking into account that \eqref{tele} leads to
\[ q_{n,m}(\eta_n)/(A_{n,m} \eta_n^n) = \eta_n^{1- \lambda_n - \lambda_{n+1}} q_{n,m-m^*}^*(\eta_n)/ p_{n+1,m}(\eta_n),
\]
we obtain
\[  |q_{n,m-m^*}^*(\eta_n)| \leq C_5 | p_{n+1,m}(\eta_n)| , \qquad n \geq n_0, \qquad n \in \Lambda.
\]
Since $\limsup_{n \in \Lambda} \mbox{\rm dist} (\zeta,\mathcal{Z}_n(f)) > 0$, it follows that
\begin{equation}
\label{Fsub} \lim_{n \in \Lambda' } |p_{n+1,m}(\eta_n)| > 0.
\end{equation}
for some subsequence $\Lambda' \subset \Lambda$.

The normalization \eqref{normado} imposed on $q_{n,m} $ implies that for any compact $K \subset \mathbb{C}$ we have $\sup_n \max_K |q_{n,m}(z)| \leq C_6 $. So, any sequence $(q_{n,m})_{n \in I}, I \subset \mathbb{Z}_+,$ contains a subsequence which converges uniformly on any compact subset of $\mathbb{C}$. This, combined with $\sigma-lim_{n\to \infty} r_{n,m} = f$ in $D_m^*(f)$, and the assumption that $\tau < \lambda(\zeta)$ imply that there exists a sequence of indices $\Lambda'' \subset \Lambda'$ such that $\lim_{n \in \Lambda''} p_{n+1,m} = F_1$ uniformly on a closed neighborhood of $\zeta$, where $F_1$ is analytic at $\zeta$ and $F_1(\zeta) = 0$ (see  \cite[Lemma 1]{gon1} where it is shown that under adequate assumptions uniform convergence on compact subsets of a region can be derived from convergence in $1$-dimensional Hausdorff content). This contradicts \eqref{Fsub}. Thus, $\tau \geq \lambda(\zeta)$ as claimed.

To complete the proof recall that $\deg q^*_{n,m-m^*} \leq m- m^*$ for all $n \geq m$. In particular, this implies that for each $n \in \Lambda$ the set $\mathcal{Z}_n(f)$ has at most $m-m^*$ points. Each zeros $\zeta$ of $q_m$ such that either $|\zeta| > 1$ or $|\zeta|\leq 1$ and is regular attracts a sequence of points in $\mathcal{Z}_n(f), n \in \Lambda$. This is clearly impossible if the total number $M$ of such zeros of $q_m$ exceeds $m -m^*$. So, $M \leq  m-m^*$. The complement is made up of zeros of $q_m$ which are singular and lie in the closure of $D_m^*(f)$. Those lying in $D_m^*(f)$ are simple poles according to Lemma \ref{cor;1}.
\end{proof}


\section{Simultaneous approximation}\label{simultaneos}
Throughout this section, $\mathbf{f}=(f_1,\dots,f_d)$ denotes a
system of formal power expansions as in \eqref{sistema} and
$\mathbf{m}=(m_1,\dots,m_d)\in\mathbb{N}^d$ is a fixed multi-index.
We are concerned with the simultaneous approximation of $\textbf{f}$
by sequences of vector rational functions defined according to
Definition \ref{defsimultaneos} taking account of
(\ref{incomplete}). That is, for each $n\in\mathbb{N}, n \geq |{\bf
m}|$, let $\left(R_{n,\mathbf{m},1},\dots,R_{n,\mathbf{m},d}\right)$
be a Hermite-Pad\'e approximation of type $(n,\mathbf{m})$
corresponding to $\mathbf{f}$.

As we mentioned earlier, $R_{n,\mathbf{m},k}$ is an incomplete
Pad\'e approximant of type $(n,|\mathbf{m}|,m_k)$ with respect to
$f_k,\,k=1,\dots,d$. Thus, from Lemma \ref{teo:5}
$$
D_{m_k}(f_k)\subset D^*_{|\mathbf{m}|}(f_k) \subset
D_{|\mathbf{m}|}(f_k),\quad k=1,\dots,d.
$$

\begin{defi} A vector ${\bf f} = (f_1,\ldots,f_d)$ of formal power
expansions is said to be polynomially independent with respect to
${\bf m} = (m_1,\ldots,m_d) \in {\mathbb{N}}^d$ if there do not
exist polynomials $p_1,\ldots,p_d$, at least one of which is
non-null, such that
\begin{itemize}
\item[d.1)] $\deg p_k < m_k ,\, k=1,\ldots,d$,
\item[d.2)] $\sum_{k=1}^d p_k f_k$ is a polynomial.
\end{itemize}
\end{defi}
In particular, polynomial independence implies that for each
$k=1,\ldots,d$, $f_k$ is not a rational function with at most $m_k-1$ poles.
Notice that polynomial independence
may be verified solely in terms of the coefficients of the formal
Taylor expansions defining the system $\mathbf{f}$. The system ${\bf f}$ is
polynomially independent with respect to ${\bf m}$ if for all
$n\geq n_0$ the polynomial $ Q_{n,{\bf m}}$ is unique and $\deg
Q_{n,{\bf m}} =|{\bf m}|$, see \cite[Lemma 3.2]{CCL}.

An approach to the proof of the conjecture could be
\begin{itemize}
\item Remove the restriction in the last part of Theorem \ref{teo:1} that the zeros of $q_m$ are distinct.
\item Assuming \eqref{condfun}, apply the improved version of  Theorem \ref{teo:1} to the components of $\bf f$.
\item Using polynomial combinations of the form \eqref{combination} prove that each zero of $Q_{|{\bf m}|}$ is a system singularity. It is sufficient to consider multi-indices of the form ${\bf m} = (1,1,\ldots,1)$ (see beginning of \cite[Section 3]{CCL} for the justification); then, \eqref{combination} reduces to linear combinations.
\item Prove the last part of the conjecture using the final statement of Lemma \ref{cor;1}.
\end{itemize}


\end{document}